\title{Intermediate dimensions - a survey}
\author{Kenneth J. Falconer}
\date{}
\def\rn{\mathbb{R}^n}
\renewcommand{\epsilon}{\varepsilon}
\newcommand\ubd{\overline{\mbox{\rm dim}}_{\rm B}\,} 
\newcommand\lbd{\underline{\mbox{\rm dim}}_{\rm B}\,} 
\newcommand\da{\underline{\mbox{\rm dim}}_{\,\theta}} 
\newcommand\db{\underline{\mbox{\rm dim}}_{\,\phi}} 
\newcommand\udb{\overline{\mbox{\rm dim}}_{\,\phi}} 
\newcommand\uda{\overline{\mbox{\rm dim}}_{\,\theta}} 
\newcommand\dth{{\mbox{\rm dim}}_{\,\theta}} 
\newcommand{\lid}{\underline{\dim}_{\,\theta}}
\newcommand{\uid}{\overline{\dim}_{\,\theta}}
\newcommand\bdd{\mbox{\rm dim}_{\rm B}}
\newcommand\hdd{\mbox{\rm dim}_{\rm H}\,} 
\newcommand{\dimh}{\dim_{\rm H}}
\newcommand{\be}{\begin{equation}} 
\newcommand{\ee}{\end{equation}} 
\DeclareMathOperator*\lowlim{\liminf}
\DeclareMathOperator*\uplim{\limsup}
 \newtheorem{theo}{Theorem}[section]
 \newtheorem{cor}[theo]{Corollary}
 \newtheorem{prop}[theo]{Proposition}
 \newtheorem{defi}[theo]{Definition}
\newtheorem{question}[theo]{Question}
\begin{document}
\maketitle

\vspace{-8mm}
\begin{center}
Mathematical Institute,	
University of St Andrews,\\
St Andrews,
Fife KY16 9SS,
UK
\medskip

E-mail: \texttt{kjf@st-andrews.ac.uk}
		 	
\end{center}

\begin{abstract}

This article surveys the $\theta$-intermediate dimensions that were introduced recently which provide a parameterised continuum of dimensions that run from Hausdorff dimension when $\theta=0$ to box-counting dimensions when $\theta=1$. We bring together diverse properties of intermediate dimensions which we illustrate by examples.
\medskip

\noindent\emph{Mathematics Subject Classification 2010}: primary: 28A80; secondary: 37C45.

\noindent\emph{Key words and phrases}: Hausdorff dimension, box-counting dimension, intermediate dimensions.
 \end{abstract}

\section{Introduction}
\setcounter{equation}{0}
\setcounter{theo}{0}

Many interesting fractals, for example many self-affine carpets, have differing box-counting and Hausdorff dimensions. A smaller value for Hausdorff dimension can result because covering sets of widely ranging scales are permitted in the definition, whereas box-counting dimensions essentially come from counting covering sets that are all of the same size.  Intermediate dimensions were introduced in \cite{FFK}  in 2019 to provide a continuum of dimensions between Hausdorff and box-counting; this is achieved by restricting the families of allowable covers in the definition of Hausdorff dimension by requiring that $|U| \leq |V|^\theta$ for all sets $U, V$  in an admissible cover, where $\theta \in [0,1]$ is a parameter.  When $\theta=1$ only covers using sets of the same size are allowable and we recover box-counting dimension, and when $\theta=0$ there are no restrictions giving Hausdorff dimension. 

This article brings together what is currently known about intermediate dimensions from a number of sources, especially \cite{Ban,Bur,BFF,FFK,Kol}; in particular Banaji \cite{Ban} has very recently obtained many detailed results.  We first consider basic properties of $\theta$-intermediate dimensions, notably continuity  when $\theta \in (0,1]$, and discuss some tools that are useful when working with intermediate dimensions. We look at some examples to show the sort of behaviour that occurs, before moving onto the more challenging case of Bedford-McMullen carpets. Finally we consider a potential-theoretic characterisation of intermediate dimensions which turns out to be useful for studying the dimensions of projections and other images of sets. Proofs for most of the results can be found elsewhere and are referenced, though some are sketched to provide a feeling for the subject. 

We  work with subsets of $\rn$ throughout, although much of the theory easily extends to more general metric spaces, see \cite{Ban}. To avoid problems of definition, we assume throughout this account that all the sets $F\subset \rn$ whose dimensions are considered are non-empty and bounded.

Whilst Hausdorff dimension $\dimh$ is usually defined via Hausdorff measure, it may also be defined directly, see \cite[Section 3.2]{Fa}.
For  $F\subset \rn$ we write  $|F|$ for the  {\em diameter} of  $F$ and say that  a finite or countable collection of subsets $\{U_i\}$ of $\rn$  is a {\em cover} of $F$ if $F\subset \bigcup_i U_i$. Then the Hausdorff dimension of $F$ is given by:
\begin{align*}\label{hdim}
\dimh F = \inf \big\{& s\geq 0  :  \mbox{ \rm for all $\epsilon >0$ there exists a cover $ \{U_i\} $ of $F$ such that $\sum_i |U_i|^s \leq \epsilon$}  \big\}.
\end{align*}
(Lower) box-counting dimension $\lbd$ may be expressed in a similar manner except that here we require the covering sets all to be of equal diameter.  For  bounded $F\subset \rn$,
\begin{align*}
\lbd F =  \inf \big\{ s\geq 0  &:  \mbox{ \rm for all $\epsilon >0$ there exists a cover $ \{U_i\} $ of $F$} \\
 & \mbox{ \rm  such that  $|U_i| = |U_j|$ for all $i,j$  and  $\sum_i |U_i|^s \leq \epsilon$}  \big\}.
\end{align*}
From this viewpoint, Hausdorff and box-counting dimensions may be regarded as extreme cases of the same definition, one with no restriction on the size of covering sets, and the other requiring them all to have equal diameters;  
 one might regard these two definitions as the extremes of a continuum of dimensions with increasing restrictions on the relative sizes of covering sets.  This motivates the definition of  {\em  intermediate dimensions} where the coverings are restricted by requiring the diameters of the covering sets to lie in a geometric range $\delta^{1/\theta} \leq |U_i| \leq \delta$  where $0\leq \theta \leq 1$ is a parameter.

\begin{defi}\label{adef}
Let $F\subset \rn$. For $0\leq \theta \leq 1$ the {\em  lower $\theta$-intermediate dimension} of $F$ is defined by
\begin{align*}
\da F =  \inf \big\{& s\geq 0  :  \mbox{ \rm for all $\epsilon >0$ and all $\delta_0>0$,  there exists $0<\delta\leq \delta_0$} \\
 & \mbox{ \rm and a cover $ \{U_i\} $ of $F$ such that  $\delta^{1/\theta} \leq  |U_i| \leq \delta$ and 
 $\sum |U_i|^s \leq \epsilon$}  \big\}.
\end{align*}
Analogously the {\em  upper $\theta$-intermediate dimension} of $F$ is defined by
\begin{align*}
\uda F =  \inf \big\{& s\geq 0  :  \mbox{ \rm for all $\epsilon >0$ there exists $\delta_0>0$ such that for all $0<\delta\leq \delta_0$,} \\
 & \mbox{ \rm there is a cover $ \{U_i\} $ of $F$ such that  $\delta^{1/\theta} \leq  |U_i| \leq \delta$ and 
 $\sum |U_i|^s \leq \epsilon$}  \big\}.
\end{align*}
\end{defi}

\noindent Note that, apart from when $\theta =0$, these definitions are unchanged if $\delta^{1/\theta} \leq  |U_i| \leq \delta$ is replaced by  $\delta \leq  |U_i| \leq \delta^{\theta}$.
\smallskip

It is immediate  that 
$$\hdd F=\underline{\mbox{\rm dim}}_{0} F = \overline{\mbox{\rm dim}}_{0} F, \quad \lbd F = \underline{\mbox{\rm dim}}_{1} F  \quad {\mbox{and}}\quad \ubd F = \overline{\mbox{\rm dim}}_{1} F, $$
where $\ubd$ is upper box-counting dimension.
Furthermore, for a bounded $F\subset \rn$ and $\theta \in [0,1]$,
\[
0\leq \hdd F \leq \da F \leq \uda F \leq \ubd F\leq n  \quad {\mbox{and}}\quad 0\leq \da F \leq  \lbd F\leq n.
\]
As with box-counting dimensions we often have $\da F = \uda F$ in which case we just write $\mbox{\rm dim}_{\theta} F = \da F = \uda F$ for the {\em $\theta$-intermediate dimension} of $F$.

We remark that a continuum of dimensions of a different form, known as the \emph{Assouad spectrum}, has also been investigated recently, see \cite{FraSurA,FraBook, spec1}; this provides a parameterised family of dimensions which interpolate between upper box-counting dimension and quasi-Assouad dimension, but we do not pursue this here.

\section{Properties of  intermediate dimensions}\label{sec:props}
\setcounter{equation}{0}
\setcounter{theo}{0}

\subsection{Basic properties}

We start by reviewing some basic properties of intermediate dimensions of a type that are familiar in many definitions of dimension. 

\begin{enumerate}
\item
{\em Monotonicity. } For all $\theta \in [0,1]$ if $E\subset F$ then $\da E\leq \da F$ and $\uda E\leq \uda F$.

\item
{\em Finite stability. } For all $\theta \in [0,1]$ if $E, F\subset \rn$  then $\uda E\cup F = \max\{\uda E,\uda F\}$. Note that, analogously with box-counting dimensions,  $\da$ is not finitely stable, and neither $\da$ or $\uda$ are countably stable (i.e. it is not in general the case that $\uda\cup_{i=1}^\infty F_i =\sup_{1\leq i<\infty} \uda F_i$).

\item
{\em Monotonicity in $\theta$. }  For all bounded $F$, $\da F$ and $\uda F$ are monotonically increasing in $\theta\in [0,1]$.

\item
{\em Closure. }  For all  $\theta \in (0,1]$, $\da F = \da{\overline F}$ and $\uda F = \uda{\overline F}$ where ${\overline F}$ is the closure of $F$. (This follows since for $\theta \in (0,1]$ it is enough to consider finite covers of closed sets in the definitions of intermediate dimensions.)

\item
{\em Lipschitz and H\"{o}lder properties.  }  Let $f: F\to \mathbb{R}^m$ be an $\alpha$-H\"{o}lder map, i.e. $|f(x)-f(y)|\leq c|x-y|^\alpha$ for $\alpha \in (0,1]$ and $c>0$. Then for all  $\theta \in [0,1]$,  
\begin{equation}\label{Holder}
\da f(F) \leq \frac1\alpha\da F\quad \mbox{and} \quad \uda f(F)\leq \frac1\alpha\uda F.
\end{equation} 
(To see this, if $ \{U_i\} $ is a cover of $F$ with $\delta \leq  |U_i| \leq \delta^{\theta}$  consider the cover of $f(F)$ by the sets $ \{f(U_i)\} $ if $c\delta^\alpha \leq |f(U_i)|$ and by  sets $V_i \supset  f(U_i)$ with $|V_i| =c\delta^\alpha$ otherwise.)

In particular, if $f: F\to f(F) \subset \mathbb{R}^m$ is bi-Lipschitz then  $\da f(F) = \da F$ and $\uda f(F)= \uda F$, i.e. $\da$ and $\uda$ are bi-Lipschitz invariants. For further Lipschitz and H\"{o}lder estimates see Banaji \cite[Section 4]{Ban}.

\end{enumerate}

\subsection{Continuity}

A natural question is whether, for a fixed bounded set $F$, $\da F$ and $\uda F$  vary continuously for $\theta\in [0,1]$. It turns out that this is the case except possibly at $\theta = 0$ where the intermediate dimensions may or may not be continuous, see the examples in Section \ref{egs}.  Continuity on $(0,1]$ follows immediately from the following inequalities which relate $\da F$, respectively $\uda F$, for different values of $\theta$.

\begin{prop}\label{ctyest}
Let $F$ be a bounded subset of $\rn$ and let $0< \theta<\phi  \leq 1$. Then 
\begin{equation}\label{ineqs2A}
\uda F \ \leq \ \udb F \ \leq \  \frac\phi\theta\ \uda F
\end{equation}
and 
\begin{equation}\label{ineqs2}
\uda F \ \leq \ \udb F \ \leq \  \uda F +\Big(1-\frac{\theta}{\phi}\Big) (n- \uda F),
\end{equation}
with corresponding inequalities where $\uda$ and $\udb$ are  replaced by  $\da$ and $\db$.
\end{prop}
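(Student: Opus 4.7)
The left inequalities $\uda F\leq \udb F$ in both \eqref{ineqs2A} and \eqref{ineqs2} are immediate from monotonicity in $\theta$ (Property 3 above, applied with $\theta<\phi$). So the task reduces to bounding $\udb F$ from above given information about $\uda F$. The overall strategy is, given any $s>\uda F$, to take a $\theta$-admissible cover $\{U_i\}$ of $F$ with $\sum_i|U_i|^s\le\varepsilon$ guaranteed by the definition of $\uda F$, and convert it into a $\phi$-admissible cover whose $r$-power sum is still controlled, for the smallest possible exponent $r$.

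For \eqref{ineqs2A} I would work at the \emph{same} scale: take a $\theta$-admissible cover at scale $\delta$, so diameters lie in $[\delta^{1/\theta},\delta]$. Sets with $|U_i|\ge\delta^{1/\phi}$ are already $\phi$-admissible and kept. Sets with $|U_i|<\delta^{1/\phi}$ are enlarged to sets $V_i\supset U_i$ of diameter exactly $\delta^{1/\phi}$. For an exponent $r\ge s$, the large sets contribute at most $\sum |U_i|^s\le\varepsilon$, while for the small ones one uses that $|U_i|^s\ge \delta^{s/\theta}$ forces their number to be at most $\varepsilon\delta^{-s/\theta}$, so their contribution is at most $\varepsilon\,\delta^{r/\phi-s/\theta}$. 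This is bounded as $\delta\to 0$ precisely when $r\ge s\phi/\theta$, and letting $s\searrow\uda F$ gives $\udb F\le(\phi/\theta)\,\uda F$. The argument for $\db,\da$ is essentially identical using the $\da$-definition (choosing $\delta$ along a suitable subsequence).

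For \eqref{ineqs2} the key idea is to work at a \emph{different} scale so that no set is too small for the $\phi$-cover, and instead deal with sets that are too large using the ambient dimension $n$. Given the target scale $\delta$ for the $\phi$-cover, choose $\delta'=\delta^{\theta/\phi}$ and take the $\theta$-admissible cover at scale $\delta'$, whose diameters lie in $[\delta'^{1/\theta},\delta']=[\delta^{1/\phi},\delta^{\theta/\phi}]$. Every $U_i$ meets the lower $\phi$-bound, but now some $U_i$ have $|U_i|>\delta$. Such sets can be covered in $\rn$ by at most $C(|U_i|/\delta)^n$ sets of diameter $\delta$. Sets with $|U_i|\in[\delta^{1/\phi},\delta]$ are retained unchanged. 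For exponent $r$, the retained sets contribute at most $\delta^{r-s}\varepsilon$, while the subdivided ones contribute at most $C\delta^{r-n}\sum |U_i|^n\le C\varepsilon\,\delta^{r-n+(n-s)\theta/\phi}$, using $|U_i|\le\delta^{\theta/\phi}$ to bound $|U_i|^{n-s}$. Both exponents are nonnegative when $r\ge n(1-\theta/\phi)+s(\theta/\phi)$, and sending $s\searrow \uda F$ yields \eqref{ineqs2}.

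The main obstacle, and the reason the two inequalities are genuinely different, is the correct choice of starting scale in \eqref{ineqs2}: taking $\delta'=\delta^{\theta/\phi}$ exactly exchanges the problem of \emph{too small} sets (which forced the factor $\phi/\theta$ in \eqref{ineqs2A}) for the problem of \emph{too large} sets, and the latter can be paid for using only the trivial covering estimate $N_\delta(U)\lesssim(|U|/\delta)^n$ in $\rn$. The resulting bound involves the ambient $n$ and is strictly better than $(\phi/\theta)\uda F$ whenever $\uda F<n$. Continuity of $\da$ and $\uda$ on $(0,1]$ then follows by letting $\phi\to\theta$ (or $\theta\to\phi$) in \eqref{ineqs2}, as both right-hand sides tend to $\uda F$ respectively $\da F$.
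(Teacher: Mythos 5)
Your proof is correct and follows essentially the same route as the paper: for \eqref{ineqs2A} you enlarge the too-small sets to the bottom of the admissible range (your counting of the small sets via $|U_i|^s\ge\delta^{s/\theta}$ is just a repackaging of the paper's term-by-term comparison using $\delta\le|U_i|$), and for \eqref{ineqs2} you subdivide the too-large sets at the shifted scale $\delta'=\delta^{\theta/\phi}$ using the trivial bound $N_\delta(U)\lesssim(|U|/\delta)^n$, which is exactly the argument the paper sketches and attributes to \cite{FFK}. No gaps; the handling of the lower dimensions via subsequences of scales is also as in the paper.
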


\begin{proof}
We include the proof of \eqref{ineqs2A} to give a feel for this type of argument. The left-hand inequality  is just monotonicity of $\uda F$.

 With $0< \theta<\phi  \leq 1$
 let ${\displaystyle t> \frac\phi\theta\, \uda F }$ and choose $s$ such that ${\displaystyle \uda F <s <\frac{\theta}{\phi}t}$.
  Given $\epsilon >0$, for all sufficiently  small $0<\delta<1$ we may find countable or finite covers $\{U_i\}_{i\in I}$ of $F$ such that
\begin{equation}\label{epsum}
\sum_{i\in I} |U_i|^s<\epsilon \quad \mbox{ and } \quad \delta \leq |U_i| \leq \delta^\theta  \quad \mbox{ for all } i\in I.
\end{equation} 
 Let
$$I_0 =\{i\in I: \delta \leq |U_i| < \delta^{\theta/\phi}\} \quad \mbox{ and } \quad I_1 =\{i\in I: \delta^{\theta/\phi} \leq |U_i| \leq \delta^\theta\}.$$ 
For each $i\in I_0$ let $V_i$ be a set with $V_i\supset U_i$ and $|V_i| = \delta^{\theta/\phi}$. 
Let $0<s<t\theta/\phi\leq n$. Then $\{W_i\}_{i\in I}:= \{V_i\}_{i\in I_0} \cup \{U_{i}\}_{i\in I_1}$ is a cover of $F$ 
by sets with diameters in the range $[\delta^{\theta/\phi} , \delta^\theta]$. 
Taking sums with respect to this cover:
\begin{align}
\sum_{i\in I}|W_i|^t \ =\  &\sum_{i\in I_0}|V_i|^t  +  \sum_{i\in I_1}|U_{i}|^t 
\ =\ \sum_{i\in I_0}\delta^{t\,\theta/\phi}  +  \sum_{i\in I_1}  |U_i|^t \nonumber\\
&\leq\  \sum_{i\in I_0}|U_i|^{t\,\theta/\phi}   +  \sum_{i\in I_1}  |U_i|^{t\,\theta/\phi} 
\ =\  \sum_{i\in I}|U_i|^{t\,\theta/\phi}\ \leq\ \sum_{i\in I}|U_i|^{s}<\epsilon. \label{sumbound}
\end{align}
Thus for all ${\displaystyle t> \frac\phi\theta\, \uda F }$, for all $\epsilon>0$, for all sufficiently small $\delta$ (equivalently, for all sufficiently small $\delta^\theta$) there is a  cover $\{W_i\}_i$ of $F$ by sets with  $(\delta^\theta)^{1/\phi} \leq |W_i| \leq \delta^\theta$ satisfying \eqref{sumbound}, so 
$\udb F  \leq {\displaystyle \frac\phi\theta\, \uda F }$. 

The analogue of \eqref{ineqs2A} for $\da$ follows by exactly the same argument by choosing covers of $F$ with $\delta \leq |U_i| \leq \delta^\theta $ for arbitrarily small $\delta$.

The proof of \eqref{ineqs2} is given in \cite{FFK}: essentially, given a cover of $F$ by sets $\{U_i\}$ with  $\delta \leq |U_i| \leq \delta^\theta $ one breaks up those $U_i$ with $\delta^\phi \leq |U_i| \leq \delta^\theta $ into smaller pieces to get a cover of $F$ by sets with diameters in the range $[\delta, \delta^\phi ]$.
\end{proof}
 
Note that the right hand inequality of \eqref{ineqs2A} is stronger than that in  \eqref{ineqs2}   precisely when  ${\displaystyle \frac{\theta}{\phi} \leq \frac{n}{\udb  F} -1}$, which is the case for all $0<\theta <\phi \leq 1$ if $\udb F \leq \frac{1}{2} n$; similarly for lower dimensions.
 
Inequality \eqref{ineqs2A} implies that ${\displaystyle \frac{\uda F}{\theta}}$ and ${\displaystyle \frac{\da F}{\theta}}$ are monotonic decreasing in $\theta \in (0,1]$;  Banaji \cite[Proposition 3.9]{Ban} points out that they are strictly decreasing if $\ubd F> 0$, respectively $\lbd F> 0$. Thus the graphs of $\theta \mapsto \uda F$ and  $\theta \mapsto \da F \, (0<\theta \leq 1)$ are starshaped with respect to the origin (i.e. each half-line from the origin in the first quadrant cuts the graphs in a single point). 

The following corollary is immediate.

\begin{cor}
The maps  $\theta \mapsto\da F$ and $\theta \mapsto\uda F$ are continuous for $\theta \in (0,1]$.
\end{cor}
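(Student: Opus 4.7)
The plan is to obtain this as an immediate squeeze from Proposition \ref{ctyest}; all the work has already been done in establishing \eqref{ineqs2A}, so continuity on $(0,1]$ is essentially a one-line consequence. Fix $\theta_0 \in (0,1]$; I would establish left and right continuity separately at $\theta_0$, noting that at the endpoint $\theta_0 = 1$ only left continuity is meaningful.

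For \emph{left continuity} at $\theta_0$, apply \eqref{ineqs2A} with $\phi = \theta_0$ and variable $\theta < \theta_0$. This gives
\[
\frac{\theta}{\theta_0}\, \overline{\dim}_{\,\theta_0} F \ \leq \ \uda F \ \leq \ \overline{\dim}_{\,\theta_0} F.
\]
Letting $\theta \to \theta_0^-$, the factor $\theta/\theta_0 \to 1$, so both bounds converge to $\overline{\dim}_{\,\theta_0} F$ and hence $\uda F \to \overline{\dim}_{\,\theta_0} F$.

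For \emph{right continuity} at $\theta_0 \in (0,1)$, apply \eqref{ineqs2A} with the lower parameter fixed at $\theta_0$ and variable upper parameter $\phi > \theta_0$, giving
\[
\overline{\dim}_{\,\theta_0} F \ \leq \ \udb F \ \leq \ \frac{\phi}{\theta_0}\, \overline{\dim}_{\,\theta_0} F.
\]
As $\phi \to \theta_0^+$, the multiplier $\phi/\theta_0 \to 1$ and $\udb F \to \overline{\dim}_{\,\theta_0} F$. The identical argument, using the stated analogue of \eqref{ineqs2A} for lower dimensions, yields continuity of $\theta \mapsto \da F$.

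There is no real obstacle here—the only subtlety is that this argument genuinely requires $\theta_0 > 0$, since the squeeze factor $\phi/\theta$ blows up as $\theta \to 0$; accordingly, continuity at $\theta = 0$ is not claimed and, as the paper signals, can indeed fail.
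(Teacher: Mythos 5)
Your proof is correct and is exactly the argument the paper intends: the paper simply declares the corollary ``immediate'' from Proposition \ref{ctyest}, and your two-sided squeeze using \eqref{ineqs2A} (with $\phi=\theta_0$ for left continuity and $\theta=\theta_0$ for right continuity) is the standard way to spell that out. The remark that the factor $\phi/\theta$ blows up as $\theta\to 0$, so that continuity at $0$ is not obtained, matches the paper's discussion as well.
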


By setting $\phi =1$ in Proposition  \ref{ctyest} and rearranging we get useful comparisons with box-counting dimensions.

\begin{cor}\label{genlbc}
Let $F$ be a bounded subset of $\rn$. Then 
\begin{equation}\label{ineqsbcA}
\uda F \ \geq \ n-  \frac{\big(n-\ubd F\big)}{\theta}
\end{equation}
and 
\begin{equation}\label{ineqsbc}
\uda F \ \geq \ \theta\, \ubd F,
\end{equation}
with corresponding inequalities where $\uda$ and $\ubd$ are  replaced by  $\da$ and $\lbd$.
\end{cor}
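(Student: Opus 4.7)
The plan is to substitute $\phi = 1$ into Proposition \ref{ctyest} and rearrange. First, I would observe that when $\phi = 1$ the constraint $\delta^{1/\phi} \leq |U_i| \leq \delta$ in Definition \ref{adef} forces $|U_i| = \delta$ for every $i$, so $\udb F$ coincides with $\ubd F$ (and similarly $\db F = \lbd F$). This is exactly one of the identifications recorded immediately after Definition \ref{adef}.

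For inequality \eqref{ineqsbc}, I would take the right-hand inequality of \eqref{ineqs2A} with $\phi = 1$, namely $\ubd F \leq \frac{1}{\theta}\uda F$, and simply multiply both sides by $\theta \geq 0$ to obtain $\uda F \geq \theta\, \ubd F$.

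For inequality \eqref{ineqsbcA}, I would take the right-hand inequality of \eqref{ineqs2} with $\phi = 1$, which reads $\ubd F \leq \uda F + (1-\theta)(n - \uda F) = \theta\, \uda F + (1-\theta) n$. Subtracting $(1-\theta)n$ from both sides and dividing by $\theta > 0$ gives $\uda F \geq \big(\ubd F - (1-\theta)n\big)/\theta = n - (n - \ubd F)/\theta$, as required. (For $\theta = 0$ the inequality is vacuous whenever $\ubd F < n$ and otherwise reduces to $\uda F \geq 0$, so no separate argument is needed.)

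The lower-dimension analogues follow verbatim by applying the identical rearrangements to the $\da$/$\db$ versions of Proposition \ref{ctyest}. Since the corollary is pure algebra built on Proposition \ref{ctyest}, there is no substantive obstacle; the only point requiring any care is the initial identification $\udb F = \ubd F$ when $\phi = 1$, after which both inequalities drop out by one-line manipulations.
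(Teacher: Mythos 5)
Your proposal is correct and is exactly the paper's argument: the corollary is stated as immediate from setting $\phi=1$ in Proposition \ref{ctyest} (so that $\udb F=\ubd F$ and $\db F=\lbd F$) and rearranging, which is precisely what you do. The algebra in both rearrangements checks out, so there is nothing to add.
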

Again \eqref{ineqsbc} gives a better lower bound than \eqref{ineqsbcA}   if and only if  ${\displaystyle \theta \leq \frac{n}{\ubd F} -1}$ which is the case for all $\theta \in (0,1]$ if $\ubd F \leq \frac{1}{2} n$, and similarly for lower dimensions.

Intermediate dimensions may or may not be continuous when $\theta=0$, see Section \ref{examplessimple} for examples. Indeed, determining whether a given set has intermediate dimensions that are continuous at $\theta=0$, which relates to the distribution of scales of covering sets for Hausdorff and box dimensions, is one of the key questions in this subject. 

Banaji \cite{Ban} introduced a generalisation of intermediate dimensions  by replacing the condition $\delta^{1/\theta} \leq  |U_i| \leq \delta$ in Definition \ref{adef} by $\Phi(\delta) \leq  |U_i| \leq \delta$, where   $\Phi: (0,Y)\to \mathbb{R}$ is monotonic and satisfies $\lim_{\delta \searrow 0} \Phi(\delta)/\delta = 0$ for some $Y>0$, to obtain families of dimensions $\underline\dim^{\Phi}F$ and $\overline\dim^{\Phi}F$; clearly when $\Phi(x) = x^{1/\theta}$ we recover $\da F$ and $\uda F$. He provides an extensive analysis of these $\Phi$-{\em intermediate dimensions}. In particular they interpolate all the way between Hausdorff and box-dimensions, that is there exist such functions $\Phi^s$ for  $s\in [\hdd F, \lbd F]$ that are increasing with $s$ with respect to a natural ordering and are such that $\overline\dim^{\Phi}F=s$ and 
$\overline\dim^{\Phi}F=\min\{s, \lbd F\}$, see \cite[Theorem 6.1]{Ban}.

\section{Some tools for intermediate dimension}\label{tools}

As with other notions of dimension, there are some basic techniques that are useful for studying  intermediate dimensions and calculating them in specific cases. 

\subsection{A mass distribution principle}

The \emph{mass distribution principle}  is frequently used for finding lower bounds for Hausdorff dimension by considering local behaviour of measures supported on the set, see \cite[Principle 4.2]{Fa}.  Here are the natural analogues for $\da$ and $\uda$ which are proved using an easy modification of the standard proof for Hausdorff dimensions.

\begin{prop}{\rm \cite[Proposition 2.2]{FFK}}\label{mdp}
Let $F$ be a Borel subset of $\rn$ and let  $0\leq \theta \leq 1$ and $s\geq 0$. Suppose that there are numbers $a, c >0$ such that for arbitrarily small $\delta>0$  we can find a Borel measure $\mu_\delta$ supported on $F$ such that
$\mu_\delta (F) \geq a $, and with
\begin{equation}\label{mdiscond}
\mu_\delta (U) \leq c|U|^s \quad \mbox{ for all Borel sets  $U \subset \rn $ with } \delta \leq |U|\leq \delta^\theta.
\end{equation}
Then  $\uda F \geq s$.  Alternatively, if measures $\mu_\delta$ with the above properties can be found for all sufficiently  small $\delta$, then   $\da F \geq s$. 
\end{prop}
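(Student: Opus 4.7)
The plan is to transcribe the classical mass distribution argument for Hausdorff dimension, being careful about the quantifier structure that distinguishes $\uda F$ from $\da F$. The core observation is: for any cover $\{U_i\}$ of $F$ with $\delta \le |U_i| \le \delta^\theta$, we may replace each $U_i$ by its closure (which preserves the diameter and renders the set Borel), and then sum the measure hypothesis to obtain
\[
a \;\le\; \mu_\delta(F) \;\le\; \sum_i \mu_\delta(U_i) \;\le\; c \sum_i |U_i|^s,
\]
so every admissible cover at scale $\delta$ satisfies $\sum_i |U_i|^s \ge a/c$.

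For the upper statement, suppose for contradiction that $\uda F < s$ and pick $t \in (\uda F, s)$. Applied with $\epsilon = a/(2c)$, Definition~\ref{adef} furnishes a $\delta_0 > 0$ such that for every $\delta \le \delta_0$ some cover $\{U_i\}$ of $F$ exists with $\delta \le |U_i| \le \delta^\theta$ and $\sum_i |U_i|^t \le \epsilon$. After shrinking $\delta_0$ so that $\delta_0^\theta \le 1$, we choose any $\delta \le \delta_0$ for which a measure $\mu_\delta$ with the stated properties exists; such $\delta$ are available by hypothesis. Since $s > t$ and $|U_i| \le 1$, we have $|U_i|^s \le |U_i|^t$, so $\sum_i |U_i|^s \le a/(2c)$, contradicting the lower bound $a/c$ from the key observation.

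For the lower statement the argument is the same, but the quantifiers on $\delta$ are reversed: $\da F < s$ only provides covers at \emph{some} arbitrarily small scales, rather than at all sufficiently small scales. This is why the hypothesis must here be strengthened to require $\mu_\delta$ for all sufficiently small $\delta$. We choose $\delta_0$ small enough that the measure hypothesis applies to every $\delta \le \delta_0$ and also $\delta_0^\theta \le 1$; the definition of $\da F$ then yields some $\delta \le \delta_0$ admitting a cover with $\sum_i |U_i|^t \le a/(2c)$, and the same chain of inequalities produces the contradiction. The only mildly delicate point of the proof is exactly this matching of the two ``arbitrarily small'' versus ``all sufficiently small'' conditions on $\delta$; the rest is routine bookkeeping.
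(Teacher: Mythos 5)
Your proof is correct and is exactly the argument the paper has in mind: it states that the proposition follows by ``an easy modification of the standard proof for Hausdorff dimensions,'' namely summing the measure bound over an admissible cover to get $a\le c\sum_i|U_i|^s$, which is precisely your key observation, and your handling of the ``arbitrarily small'' versus ``all sufficiently small'' quantifiers matching the two conclusions is the right bookkeeping. No substantive differences from the paper's (cited) proof.
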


Note that in  Proposition \ref{mdp} a different measure $\mu_\delta$ is used for each $\delta$, but  it is essential that they all assign mass at least $a>0$ to $F$. In practice $\mu_\delta$ is often a finite sum of point masses.  

\subsection{A Frostman type lemma}

\emph{Frostman's lemma} is another powerful tool in fractal geometry which is a sort of dual to Proposition \ref{mdp}. We state here a version for intermediate dimensions. As usual $B(x,r)$ denotes the closed ball of centre $x$ and radius $r$.

\begin{prop}{\rm \cite[Proposition 2.3]{FFK}}\label{frostman}
Let $F$ be a compact subset of $\rn$, let $0< \theta \leq 1$, and let $0< s< \da F$.  Then there exists $c >0$ such that for all $\delta \in (0,1)$  there is a Borel probability measure  $\mu_\delta$ supported on $F$ such that for all $x \in \rn$ and $\delta^{1/\theta} \leq r \leq \delta$, 
\be
\mu_\delta (B(x,r)) \leq c r^s .\label{frostineq}
\ee
\end{prop}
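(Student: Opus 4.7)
The plan is to adapt the classical dyadic-cube/max-flow proof of Frostman's lemma to the restricted scale range $[\delta^{1/\theta},\delta]$. First, the hypothesis $s<\da F$ unpacks (as the negation of the defining inequality for $\da F$ at the value $s$) to the following quantitative statement: there exist $\epsilon_0>0$ and $\delta_0\in(0,1)$ such that for every $\delta\in(0,\delta_0]$ and every cover $\{U_i\}$ of $F$ with $\delta^{1/\theta}\leq|U_i|\leq\delta$, one has $\sum_i|U_i|^s\geq\epsilon_0$. The remaining scales $\delta\in(\delta_0,1)$ are handled at the end by taking $\mu_\delta$ to be a point mass at a fixed point of $F$ and enlarging $c$ to absorb the finitely many scales $r\geq\delta_0^{1/\theta}$.

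Fix $\delta\in(0,\delta_0]$. Since $F$ is compact, enclose it in a single dyadic cube $Q_0$. Choose integers $M\leq N$ with $2^{-M}$ comparable to $\delta$ and $2^{-N}$ comparable to $\delta^{1/\theta}$, and form the finite rooted tree whose nodes at level $k$, for $M\leq k\leq N$, are the closed dyadic subcubes of $Q_0$ of side $2^{-k}$ meeting $F$, with edges given by the parent-child relation (and a notional root attached to the level-$M$ nodes). Assign each node $Q$ a capacity $|Q|^s$ and consider the maximum non-negative flow from the root to the leaves (level-$N$ cubes) subject to these node capacities, which reduce to standard edge capacities by node-splitting. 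By the max-flow/min-cut theorem this maximum equals the minimum of $\sum_{Q\in\mathcal{C}}|Q|^s$ taken over all cuts $\mathcal{C}$, where a cut is a collection of nodes meeting every root-to-leaf path; any such cut is a cover of $F$ by dyadic cubes of diameters in a range comparable to $[\delta^{1/\theta},\delta]$. Applying Step~1 to the associated admissible cover (after a bounded-overlap rescaling from dyadic cube diameters to values in $[\delta^{1/\theta},\delta]$) yields max-flow $\geq c_1\epsilon_0$ for a constant $c_1>0$ depending only on $n$ and $s$.

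Distributing the resulting leaf-masses to arbitrarily chosen points of $Q\cap F$ produces a finite Borel measure $\nu_\delta$ supported on $F$ with $\nu_\delta(F)\geq c_1\epsilon_0$ and with $\nu_\delta(Q)\leq|Q|^s$ for every dyadic cube $Q$ at a level in $[M,N]$. Normalising yields the desired Borel probability measure $\mu_\delta$ on $F$. Given any ball $B(x,r)$ with $\delta^{1/\theta}\leq r\leq\delta$, a dimensional constant $K_n$ of dyadic cubes of side comparable to $r$ covers $B(x,r)$, each carrying $\nu_\delta$-mass at most a multiple of $r^s$; dividing by the normalising constant gives $\mu_\delta(B(x,r))\leq c\,r^s$ with $c$ depending only on $n$, $s$, and $\epsilon_0$, and in particular independent of $\delta$ and $x$.

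The principal technical obstacle is the faithful translation between set-diameter admissible covers (as in the definition of $\da F$) and dyadic-cube cuts (required by the max-flow formulation). Each direction is handled by the standard observation that every set of diameter $d$ lies in a bounded number of dyadic cubes of side comparable to $d$ and conversely; the bookkeeping is routine, but one must be careful that the lower bound $\epsilon_0$ from Step~1 is preserved up to purely dimensional factors, so that the final constant $c$ is truly independent of $\delta$.
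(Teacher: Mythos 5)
Your proof is correct and follows essentially the same route as the source the paper cites for this proposition (\cite[Proposition 2.3]{FFK}): unpack $s<\da F$ into a uniform lower bound $\epsilon_0$ on the $s$-power sums of all admissible covers at scales $\delta\le\delta_0$, then run the standard dyadic-cube max-flow/min-cut construction of Frostman's lemma restricted to levels between $\delta$ and $\delta^{1/\theta}$, normalise, and absorb dimensional constants and the range $\delta\in(\delta_0,1)$ into $c$. The only cosmetic slip is describing the scales $r\ge\delta_0^{1/\theta}$ handled by a point mass as ``finitely many'' (they form a continuum), but the bound $1\le\delta_0^{-s/\theta}r^s$ absorbs them into $c$ exactly as you intend.
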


Fraser has pointed out a nice alternative proof of  \eqref{ineqs2A} using the Frostman's lemma and the mass distribution principle. Briefly, let $0<\theta <\phi \leq 1$. 
if $s< \underline{\dim}_\phi F$, Proposition \ref{frostman} gives probability measures $\mu_\delta$  on $F$ (which we may take to be compact) such that $\mu_\delta(B(x,r)) \leq c  r^s$ for $\delta^{1/\phi}\leq r\leq \delta$. 
If  $\delta^{1/\theta} \leq r\leq \delta^{1/\phi}$ then
$$ \mu_\delta(B(x,r))\leq \mu_\delta(B(x,\delta^{1/\phi}))\leq c\,\delta^{s/\phi} \leq c\, r^{s\theta/\phi},$$
so $ \mu_\delta(B(x,r))\leq c\, r^{s\theta/\phi}$ for all $\delta^{1/\theta} \leq r\leq \delta$. Using Proposition \ref{mdp} $\da F\geq  s\phi/\theta$. This is true for all $s< \underline{\dim}_\phi F$ so $\da F\geq \frac{\theta}{\phi}\underline{\dim}_\phi F$.

\subsection{Relationship with Assouad dimension}

Assouad dimension has been studied intensively in recent years, see the books \cite{FraBook,robinson} and paper \cite{Fra}. Although Assouad dimension does not {\it a priori} seem closely related to intermediate dimensions, it turns out that information about the Assouad dimension of a set can refine estimates of intermediate dimensions and under certain conditions imply discontinuity at  $\theta=0$.  

The \emph{Assouad dimension} of $F \subset \rn$ is defined by
\begin{eqnarray*}
\dim_\textup{A} F &=& \inf \Big\{ s \geq 0  \ : \ \text{there exists $C>0$ such that $N_r(F \cap B(x,R)) \leq C \Big(\frac{R}{r} \Big)^s$ } \\
&\,& \qquad \qquad   \text{for all $x \in F$ and all $0<r<R$}  \Big\},
\end{eqnarray*}
where $N_r(A)$ denotes the smallest number of sets of diameter at most $r$ that can cover a set $A$.  In general  $\underline{\dim}_\textup{B} F \leq \overline{\dim}_\textup{B} F \leq \dim_\textup{A} F \leq n$, but equality of these three dimensions often occurs, even if the Hausdorff dimension and box-counting dimension differ, for example if the box-counting dimension is equal to  the ambient spatial dimension. 

The following proposition due to Banaji, which extends an earlier estimate in  \cite[Proposition 2.4]{FFK}, gives lower bounds for intermediate dimensions in terms of Assouad and box dimensions. This lower bound is sharp, taking $F$ to be the $F_p$ of Section \ref{secseq}, and can be particular useful near $\theta=1$ where the estimate approaches the box dimension.

\begin{prop}{\rm \cite[Proposition 3.10]{Ban}} \label{assouad}
For a bounded set $F \subset \rn$ and $\theta \in (0,1]$, 
\[
\da F \geq  \frac{\theta\dim_\textup{A}\! F\, \underline{\dim}_\textup{B} F}{\dim_\textup{A} F -(1-\theta) \underline{\dim}_\textup{B} F},
\]
with a similar inequality for upper dimensions. In particular, if $\underline{\dim}_\textup{B} F = \dim_\textup{A} F$ $($which is always the case if $\underline{\dim}_\textup{B} F = n$$)$, then $\da F =\overline{\dim}_\theta F = \underline{\dim}_\textup{B} F = \dim_\textup{A} F$ for all $\theta \in (0,1]$. 
\end{prop}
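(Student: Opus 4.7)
My plan is to invoke the mass distribution principle (Proposition \ref{mdp}) and construct, for each sufficiently small $\delta$, a probability measure on $F$ supported on a maximal $r_*$-separated subset, where $r_* = \delta^\gamma$ is an intermediate scale ($\theta < \gamma < 1$) chosen to balance two local estimates. Write $A = \dim_\textup{A} F$ and $B = \underline{\dim}_\textup{B} F$. The equality case $A = B$ follows trivially from $\da F \leq \lbd F = B = A$ (which equals $s^* := \theta AB/(A-(1-\theta)B) = A$), so I assume $A > B$ henceforth.

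Fix $s \in (0, s^*)$. Rearranging $s < s^*$ gives the strict inequality $s/B < \theta(A-s)/(A-B)$, so I can pick $\gamma$ strictly between these two quantities; a short check using $s < B$ shows this interval lies inside $(\theta, 1)$, so $\delta < r_* < \delta^\theta$. Let $S_\delta \subset F$ be a maximal $r_*$-separated set; by the definition of lower box dimension, for every $\epsilon > 0$ and all sufficiently small $\delta$ one has $|S_\delta| \geq r_*^{-(B-\epsilon)}$. Take $\mu_\delta$ to be the uniform probability measure on $S_\delta$.

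The bound on $\mu_\delta(U)$ for $\delta \leq |U| \leq \delta^\theta$ splits by scale. If $|U| \leq r_*$, then $U$ contains $O(1)$ points of $S_\delta$ by $r_*$-separation, yielding $\mu_\delta(U) \leq C r_*^{B-\epsilon}$; combined with $|U| \geq \delta$, this is $\leq C |U|^s$ exactly when $\gamma \geq s/(B-\epsilon)$, which is true for small $\epsilon$ since $\gamma > s/B$. If $|U| > r_*$, then $U$ lies in a ball of radius $|U|$ around a point of $F$, so the Assouad-dimension bound (with $\epsilon$-slack) gives $|S_\delta \cap U| \leq C(|U|/r_*)^{A+\epsilon}$, hence $\mu_\delta(U) \leq C |U|^{A+\epsilon} r_*^{B-A-2\epsilon}$; using $|U| \leq \delta^\theta$, this is $\leq C|U|^s$ when $\gamma \leq \theta(A-s+\epsilon)/(A-B+2\epsilon)$, again satisfied for small $\epsilon$ since $\gamma < \theta(A-s)/(A-B)$. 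Proposition \ref{mdp} then gives $\da F \geq s$, and letting $s \uparrow s^*$ finishes the lower bound. The inequality for $\uda F$ is analogous, replacing $\underline{\dim}_\textup{B} F$ with $\overline{\dim}_\textup{B} F$ and using the ``arbitrarily small $\delta$'' half of Proposition \ref{mdp}.

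The main obstacle is identifying the right intermediate scale. The extremes $\gamma = 1$ (i.e.\ $r_* = \delta$) and $\gamma = \theta$ (i.e.\ $r_* = \delta^\theta$) recover only the weaker bounds $\da F \geq \theta B$ from Corollary \ref{genlbc} and $\da F \geq A - (A-B)/\theta$ respectively; the quantity $s^*$ appears precisely because the feasibility condition $s/B < \theta(A-s)/(A-B)$ is equivalent to $s < s^*$, and at the meeting point $\gamma = s^*/B$ the two local estimates for $\mu_\delta(U)$ become simultaneously sharp. Once the scale is isolated, the remaining algebra is routine.
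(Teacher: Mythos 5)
Your argument is essentially the standard (and, as far as one can tell, the intended) one: the survey gives no proof of this proposition and defers to Banaji, whose approach is likewise to spread a uniform measure over a maximal separated set at an intermediate scale $\delta^\gamma$ and play the box-counting lower bound (controlling the mass of small $U$) against the Assouad covering bound (controlling the number of atoms in large $U$), then invoke Proposition \ref{mdp}. Your choice of $\gamma$, the two local estimates, the $\epsilon$-bookkeeping, and the identification of $s^*$ as the feasibility threshold for $s/B<\theta(A-s)/(A-B)$ all check out.

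Two repairs are needed, one of substance. First, your dismissal of the case $A=B$ is not valid: the observation $\da F\le \lbd F=B$ is the trivial \emph{upper} bound, whereas the proposition asserts the \emph{lower} bound $\da F\ge s^*=A$, which in that case is precisely the nontrivial content (it is what forces $\da F=\lbd F$ in the ``in particular'' statement). Fortunately your own construction covers it: when $A=B$ the constraint $\gamma<\theta(A-s)/(A-B)$ is vacuous, and the second local estimate only requires $\gamma\le\theta(A-s+\epsilon)/(2\epsilon)$, which holds for small $\epsilon$ since $A-s>0$; so simply do not exclude $A=B$ (and discard separately the trivial case $B=0$, where $s^*=0$). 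Second, a minor point: the interval $\bigl(s/B,\ \theta(A-s)/(A-B)\bigr)$ need not lie inside $(\theta,1)$, since its left endpoint can be smaller than $\theta$; but its intersection with $(\theta,1)$ is nonempty because $s<B$ gives $\theta(A-s)/(A-B)>\theta$ and $s^*\le B$ gives $s/B<1$, so a valid $\gamma$ still exists. With these adjustments the proof is complete.
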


One consequence of  Proposition \ref{assouad} is that if $\dim_\textup{H} F < \underline{\dim}_\textup{B} F = \dim_\textup{A} F$, then the intermediate dimensions $\da F$ and $\uda F$ are  constant on $(0,1]$ and discontinuous at $\theta = 0$.  This will help us analyse examples that exhibit a range of behaviours in Section \ref{examplessimple}.  

Banaji also shows \cite[Proposition 3.8]{Ban}  that \eqref{ineqs2A}, \eqref{ineqs2} and  \eqref{ineqsbcA} may be strengthened by incorporating the Assouad dimension of $F$ into the right-hand estimates. 

\subsection{Product formulae}

It is natural to relate dimensions of products of sets to those of the sets themselves. The following product formulae for  intermediate dimensions are of interest in their own right and are also useful in constructing examples.

\begin{prop}{\rm \cite[Proposition 2.5]{FFK}} \label{products}
Let $E \subset \rn$ and $F \subset \mathbb{R}^m$ be bounded and let $\theta \in [0,1]$.  Then
\be\label{prodineq}
\da E + \da F\ \leq\ \da (E \times F) \    \leq\ \overline{\dim}_\theta ( E \times F)\ \leq\ \overline{\dim}_\theta  E + \overline{\dim}_\textup{B} F.
\ee
\end{prop}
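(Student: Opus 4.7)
The middle inequality $\da(E \times F) \leq \uda(E \times F)$ holds by definition, so the plan focuses on the outer two bounds. I will assume, via the closure property, that $E$ and $F$ are compact (and use, say, the $\ell^\infty$ metric on $\rn \times \mathbb{R}^m$ so that $|U \times V| = \max\{|U|,|V|\}$, which only affects constants). The case $\theta = 0$ reduces to the classical product formula for Hausdorff dimension, so I will concentrate on $\theta \in (0,1]$.

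For the upper bound, fix $s > \uda E$ and $t > \ubd F$. By definition of $\uda E$, for every sufficiently small $\delta$ there is a cover $\{U_i\}$ of $E$ with $\delta^{1/\theta} \leq |U_i| \leq \delta$ and $\sum_i |U_i|^s \leq \epsilon$. Since $|U_i| \leq \delta$ is small, the definition of $\ubd F$ gives $N_{|U_i|}(F) \leq C |U_i|^{-t}$ for a uniform constant $C$, so we may cover $F$ by sets $V_{i,1},\ldots,V_{i,N_i}$ of diameter exactly $|U_i|$ with $N_i \leq C|U_i|^{-t}$. The products $\{U_i \times V_{i,j}\}$ then cover $E \times F$, each has diameter in $[\delta^{1/\theta}, \delta]$, and
\begin{equation*}
\sum_{i,j} |U_i \times V_{i,j}|^{s+t} \ \leq \ \sum_i N_i\, |U_i|^{s+t} \ \leq \ C \sum_i |U_i|^{s} \ \leq \ C\epsilon.
\end{equation*}
Hence $\uda(E \times F) \leq s+t$, and letting $s \searrow \uda E$ and $t \searrow \ubd F$ yields the right-hand inequality in \eqref{prodineq}.

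For the lower bound, fix $s < \da E$ and $t < \da F$ (assuming both are positive, otherwise the inequality is trivial). Proposition \ref{frostman} supplies constants $c_1, c_2 > 0$ and, for every small $\delta > 0$, Borel probability measures $\mu_\delta$ on $E$ and $\nu_\delta$ on $F$ satisfying $\mu_\delta(B(x,r)) \leq c_1 r^s$ and $\nu_\delta(B(y,r)) \leq c_2 r^t$ for all $\delta^{1/\theta} \leq r \leq \delta$. The product measure $\pi_\delta := \mu_\delta \times \nu_\delta$ is a probability measure on $E \times F$. For any Borel set $W \subset \rn \times \mathbb{R}^m$ with $\delta^{1/\theta} \leq |W| \leq \delta^\theta$, enclosing $W$ in a ball of comparable radius and applying the two Frostman estimates to its projections gives
\begin{equation*}
\pi_\delta(W) \ \leq \ \mu_\delta(B(x,|W|)) \cdot \nu_\delta(B(y,|W|)) \ \leq \ c_1 c_2\, |W|^{s+t}.
\end{equation*}
Proposition \ref{mdp} then yields $\da(E \times F) \geq s+t$, and taking $s \nearrow \da E$, $t \nearrow \da F$ completes the proof.

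The main technical subtlety is matching the admissible diameter windows on the two sides. In the upper bound, the sets $V_{i,j}$ depend on the scale $|U_i|$, which varies across $[\delta^{1/\theta},\delta]$, but this is fine because each $|U_i \times V_{i,j}|$ lies in the same window. In the lower bound, the delicate point is that Frostman only controls scales $r \in [\delta^{1/\theta},\delta]$; this is why the mass distribution principle is applied on the same $\delta$-window, and why we need both Frostman measures for a common $\delta$, which Proposition \ref{frostman} provides. Asymmetry between $\uda E$ (which appears) and $\da E$ (which does not) on the right of \eqref{prodineq} is forced by using $\ubd F$ rather than $\lbd F$: the box-type cover of $F$ for upper estimates requires the worse bound on $N_r$, which makes the box dimension unavoidable.
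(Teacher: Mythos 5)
Your proof is correct and follows essentially the same route as the paper's sketch: the Frostman-type lemma (Proposition \ref{frostman}) combined with the mass distribution principle (Proposition \ref{mdp}) applied to the product measure for the left-hand inequality, and the explicit product cover with at most $|U_i|^{-t}$ sets of diameter $|U_i|$ covering $F$ for the right-hand inequality. The only blemish is a notational slip in the lower bound, where the admissible window should be $\delta^{1/\theta}\leq |W|\leq \delta$ (the range on which the Frostman estimates hold), not $[\delta^{1/\theta},\delta^{\theta}]$; this does not affect the argument.
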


\noindent {\it Sketch proof.}
The cases $\theta=0,1$ are well-known, see \cite[Chapter 7]{Fa}.
For other $\theta$ the left hand inequality follows by using Proposition \ref{frostman} to put measures on $E$ and $F$ satisfying inequalities of the form \eqref{frostineq} and then applying Proposition \ref{mdp} to the product of these two measures.

The middle inequality is trivial. For the right hand inequality let $s > \overline{\dim}_\theta  E$ and $d > \overline{\dim}_\textup{B} F$.  We can find a cover of $E$ by sets $\{U_i\}$ with $\delta^{1/\theta} \leq |U_i| \leq \delta$ for all $i$ and with $\sum_{i} |U_i|^s \leq \epsilon$. Then, for  each $i$, we find a cover $\{U_{i,j}\}_{j}$ of $F$ by  at most $|U_i|^{-d}$ sets with diameters $|U_{i,j}|=|U_{i}|$ for all $j$.  Thus
$ E\times F\ \subset\  \bigcup_i \bigcup_j \big(U_i \times U_{i,j}\big)$ 
where $\delta^{1/\theta} \leq |U_i \times U_{i,j}| \leq \sqrt{2}\delta$ for all $i,j$.
A simple estimate gives
$\sum_i\sum_j |U_i \times U_{i,j}|^{s+d} \leq\  2^{(s+d)/2}\epsilon$, leading to the right hand inequality.
\hfill $\Box$
\medskip

Banaji \cite[Theorem 5.5]{Ban} extends such product inequalities to $\Phi$-intermediate dimensions.

\section{Some examples}\label{egs}
\setcounter{equation}{0}
\setcounter{theo}{0}

The following basic examples in $\mathbb{R}$ or $\mathbb{R}^2$ serve to give a feel for intermediate dimensions and indicate some possible behaviours of $\da $ and $ \overline{\dim}_\theta$ as $\theta$ varies. 

\subsection{Convergent sequences}\label{secseq}
The {\it $p${\em th} power sequence} for  $p>0$ is given by
\begin{equation}\label{fp}
F_p = \Big\{0, \frac{1}{1^p}, \frac{1}{2^p},\frac{1}{3^p},\ldots \Big\}.
\end{equation}
Since $F_p$ is countable $\hdd F_p=0$ and a standard exercise shows that $\bdd F_p=1/(p+1)$, see \cite[Chapter 2]{Fa}. We  obtain the intermediate dimensions of $F_p$.
\begin{prop}{\rm \cite[Proposition 3.1]{FFK}}\label{conseq}
For $p>0$ and $0\leq \theta \leq 1$,
\be\label{pex}
\da F_p  = \overline{\dim}_\theta F_p = \frac{\theta}{p+\theta}.
\ee
\end{prop}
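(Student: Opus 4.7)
The plan is to establish the two inequalities $\da F_p \geq \theta/(p+\theta)$ (via the mass distribution principle, Proposition \ref{mdp}) and $\uda F_p \leq \theta/(p+\theta)$ (by exhibiting an efficient cover). The $\theta=0$ case reduces to $\dimh F_p = 0$, which holds because $F_p$ is countable, so I restrict attention to $\theta \in (0,1]$ and work throughout with the equivalent convention $\delta \leq |U_i| \leq \delta^\theta$ noted in the remark after Definition \ref{adef}.

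For the lower bound, for each sufficiently small $\delta>0$ I would let $\mu_\delta$ be the uniform probability measure on the first $K := \lceil \delta^{-\theta/(p+\theta)} \rceil$ atoms $\{k^{-p} : 1 \leq k \leq K\}$, so each carries mass $1/K \lesssim \delta^{\theta/(p+\theta)}$. The task is to verify $\mu_\delta(U) \leq c\,|U|^s$ uniformly in $\delta$ for $\delta \leq |U| \leq \delta^\theta$, with $s = \theta/(p+\theta)$. A set $U$ meeting only one atom contributes $1/K \leq \delta^{\theta/(p+\theta)} \leq |U|^s$ since $|U| \geq \delta$. If $U$ meets several atoms, I use that the spacing between $k^{-p}$ and $(k+1)^{-p}$ is of order $k^{-(p+1)}$, so an interval of length $|U|$ around $k_0^{-p}$ meets at most of order $|U|\,k_0^{p+1}$ atoms; the resulting mass is maximised at $k_0 = K$, giving a bound of order $|U|\,K^p = |U|\,\delta^{-p\theta/(p+\theta)}$, and the required inequality $|U|^{1-s} \lesssim \delta^{p\theta/(p+\theta)}$ is exactly tight at $|U|=\delta^\theta$ and $s = \theta/(p+\theta)$. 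A short check shows that $r^{-1/p}>K$ whenever $r\leq \delta^\theta$, so any set near $0$ disjoint from the atoms carries no mass and need not be considered. Applying Proposition \ref{mdp} then yields $\da F_p \geq \theta/(p+\theta)$, and a fortiori the same inequality for $\uda F_p$.

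For the upper bound I would cover $F_p$ by a mix of the two extreme admissible scales. With the same $K$ as above, cover each of the first $K$ atoms by an individual interval of diameter $\delta$, and cover the tail $\{0\}\cup\{k^{-p} : k > K\} \subset [0, K^{-p}] = [0,\delta^{\theta p/(p+\theta)}]$ by about $\lceil \delta^{-\theta^2/(p+\theta)} \rceil$ intervals of diameter $\delta^\theta$. The two contributions to $\sum_i |U_i|^s$ are then, up to absolute constants, $\delta^{s-\theta/(p+\theta)}$ and $\delta^{\theta(s-\theta/(p+\theta))}$, each of which tends to $0$ as $\delta\to 0$ for any fixed $s > \theta/(p+\theta)$; this gives $\uda F_p \leq \theta/(p+\theta)$ and completes the proof. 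The main obstacle is locating the correct threshold $K \sim \delta^{-\theta/(p+\theta)}$: a larger $K$ makes the individual $\delta$-covers too expensive, while a smaller $K$ requires too many $\delta^\theta$-intervals for the tail, and the exponent $\theta/(p+\theta)$ emerges precisely by equating these two contributions (equivalently, by balancing the single-atom and multi-atom constraints in the Frostman argument above).
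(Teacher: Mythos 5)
Your proposal is correct and follows essentially the same route as the paper: your threshold $K=\lceil\delta^{-\theta/(p+\theta)}\rceil$ coincides with the paper's $M=\lceil\delta^{-(s+\theta(1-s))/(p+1)}\rceil$ at the critical exponent $s=\theta/(p+\theta)$, the two-scale cover (intervals of diameter $\delta$ on the first $K$ atoms, intervals of diameter $\delta^{\theta}$ on the tail $[0,K^{-p}]$) is the same, and your normalised uniform measure on the first $K$ atoms is the paper's measure \eqref{mass} up to a constant factor, with the same gap-counting verification of \eqref{mdiscond}. No gaps; the balancing heuristic you describe is exactly how the paper chooses $M$.
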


\noindent {\it Sketch proof.}
This is clearly valid when $\theta =0$. 
Otherwise, to bound  $\overline{\dim}_\theta F_p$ from above, let $0<\delta<1$ and let $M =\lceil \delta^{-(s +\theta(1-s))/(p+1)}\rceil$.
Take a covering ${\mathcal U}$ of  $F_p$ consisting of the $M$ intervals $B(k^{-p}, \delta/2)$ of length $\delta$ for $1\leq k\leq M$ together with  $\lceil M^{-p} /\delta^\theta\rceil \leq M^{-p}/\delta^\theta +1$ intervals  of length $\delta^\theta$  that cover the left hand interval $[0, M^{-p}].$   Then
\begin{eqnarray} 
\sum_{U\in {\mathcal U}} |U|^s &\leq &  M\delta ^s + \delta^{\theta s}\Big(\frac{1}{M^p\delta^\theta}+ 1\Big)\label{optm}\\ 
& \leq & 2\delta^{(\theta (s-1)+sp)/(p+1)} +  \delta^s+ \delta^{\theta s} \ \to \ 0 \nonumber
\end{eqnarray} 
as $\delta \to 0$ if $s(\theta +p) > \theta$. Thus $\overline{\dim}_\theta F_p \leq\theta/(p+\theta)$.  
[Note that  $M$ was chosen essentially to minimise the expression  \eqref{optm} for given $\delta$.]
\medskip

For the lower bound we put a suitable measure on $F_p$ and apply Proposition \ref{mdp}. Let $s = \theta/(p+\theta)$ and $0<\delta < 1$ and, as with the upper bound, let $M =\lceil \delta^{-(s +\theta(1-s))/(p+1)}\rceil$.  Define $\mu_\delta$ as the sum of point masses on the points $1/k^p \ (1\leq k<\infty)$ with
\begin{equation}\label{mass}
 \mu_\delta \Big(\Big\{\frac{1}{k^p}\Big\}\Big)\  = \ 
\left\{
\begin{array}{cl}
 \delta^s & \mbox{ if } 1\leq k \leq  M   \\
 0 &    \mbox{ if } M+1\leq k <\infty  
\end{array}
\right. .
\end{equation}
Then
$$ 
\mu_\delta(F_p) \ =\  M\delta^s 
\ \geq\ \delta^{-(s +\theta(1-s))/(p+1)}\delta^s = 1 
$$
by the choice of $s$. 
To check \eqref{mdiscond}, note that 
the gap between any two points of $F_p$ carrying mass is at least $p/M^{p+1}$.
A set  $U$ such that $\delta \leq |U|\leq \delta^\theta$,
 intersects at most 
$1+ |U|/(p/M^{p+1})  = 1+|U|M^{p+1}/p $ of the points of $F_p$ which have mass $\delta^s$.
Hence
\begin{eqnarray*} 
\mu_\delta (U)\ \leq \ \delta^s + \frac{1}{p}|U|\delta^s \delta^{-(s +\theta(1-s))} 
\ \leq\  \Big(1+ \frac{1}{p}\Big)|U|^s, 
\end{eqnarray*}
 Proposition  \ref{mdp} gives $\da F_p \geq s = \theta/(p+\theta)$.
\hfill $\Box$
\bigskip

Here is a generalisation of Proposition \ref{conseq} to sequences with `decreasing gaps'.
Let $a\in \mathbb{R}$ and let $f: [a,\infty) \to (0,1]$ be continuously differentiable with  $f'(x)$ negative and increasing and $f(x) \to 0$ as $ x\to\infty$. Considering integer values, the mean value theorem gives that $f(n)-f(n+1)$ is decreasing, so the sequence $\{f(n)\}_n$ is a `decreasing sequence with decreasing gaps'.

\begin{prop}\label{conseqgaps}
With $f$ as above, let
$$F = \big\{0, f(1), f(2), \ldots \big\}.$$
Suppose that 
${\displaystyle \frac{xf'(x)}{f(x)} \to -p}$ as $ x\to\infty$, where $0\leq p\leq \infty$.
Then for all $0< \theta \leq 1$,
$$\da F = \overline{\dim}_\theta F= \frac{\theta}{p+\theta},$$
taking this expression to be $0$ when $p=\infty$.
\end{prop}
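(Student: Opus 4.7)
The plan is to adapt the proof of Proposition~\ref{conseq} almost verbatim, with the explicit values $k^{-p}$ replaced by $f(n)$ and a Karamata-type $\epsilon$-correction absorbed at the end. The hypothesis $xf'(x)/f(x)\to -p$ is a regular-variation condition: integrating $f'/f$ on $[N,x]$ shows that for every $\epsilon>0$ (and, when $p=\infty$, for every finite value in place of $p\pm\epsilon$) there exist $C_\epsilon\geq 1$ and $N_\epsilon\in\bbbn$ such that
\begin{equation*}
C_\epsilon^{-1}\,n^{-p-\epsilon}\ \leq\ f(n)\ \leq\ C_\epsilon\,n^{-p+\epsilon}\qquad (n\geq N_\epsilon),
\end{equation*}
and, by the mean value theorem on $[n,n+1]$ together with $-xf'(x)/f(x)=p+o(1)$ and monotonicity of $f'$,
\begin{equation*}
C_\epsilon^{-1}\,n^{-p-1-\epsilon}\ \leq\ f(n)-f(n+1)\ \leq\ C_\epsilon\,n^{-p-1+\epsilon}.
\end{equation*}
These estimates dispose of the boundary values: if $p=\infty$ the first bound forces $f$ to decay faster than any power, so $\ubd F=0$ and $\uda F=0$; if $p=0$ the gap estimate gives $\gtrsim\delta^{-1/(1+\epsilon)}$ well-separated points, so $\ubd F=1=\dim_\textup{A} F$, and Proposition~\ref{assouad} yields $\dim_\theta F=1=\theta/(p+\theta)$ for $\theta\in(0,1]$.

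For the main case $0<p<\infty$ I would mirror the argument for $F_p$. For the upper bound, fix $s>\theta/(p+\theta)$, choose $\epsilon>0$ so small that $s>\theta/(p+\theta-\epsilon)$, and set $M=\lceil\delta^{-(s+\theta(1-s))/(p+1-\epsilon)}\rceil$; cover each of $f(1),\ldots,f(M)$ by an interval of length $\delta$ and cover $[0,f(M)]$ by $\lceil f(M)/\delta^\theta\rceil$ intervals of length $\delta^\theta$. Using $f(M)\leq C_\epsilon M^{-p+\epsilon}$, the sum $\sum|U_i|^s$ is dominated by $M\delta^s+C_\epsilon M^{-p+\epsilon}\delta^{\theta(s-1)}+\delta^{\theta s}$; the choice of $M$ balances the first two terms at the common positive power $\delta^{[s(p+\theta-\epsilon)-\theta]/(p+1-\epsilon)}$, giving $\uda F\leq s$ and hence $\uda F\leq\theta/(p+\theta)$.

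For the lower bound, fix $\epsilon>0$ small, set $s_\epsilon=\theta/(p+\theta+\epsilon)$, and for each small $\delta$ put point mass $\delta^{s_\epsilon}$ on each of $f(1),\ldots,f(M)$ with $M=\lceil\delta^{-s_\epsilon}\rceil$, so the total mass is at least $1$. The smallest gap among these points is $f(M-1)-f(M)\geq C_\epsilon^{-1}M^{-p-1-\epsilon}$, so any Borel $U$ with $\delta\leq|U|\leq\delta^\theta$ meets at most $1+C_\epsilon|U|M^{p+1+\epsilon}$ mass-carrying points, whence
\begin{equation*}
\mu_\delta(U)\ \leq\ \delta^{s_\epsilon}+C_\epsilon\,|U|\,\delta^{s_\epsilon}M^{p+1+\epsilon}\ \leq\ |U|^{s_\epsilon}+C'_\epsilon\,|U|\,\delta^{-s_\epsilon(p+\epsilon)}.
\end{equation*}
The identity $-s_\epsilon(p+\epsilon)=\theta(s_\epsilon-1)$, which is exactly how $s_\epsilon$ was chosen, together with $|U|\leq\delta^\theta$ and $s_\epsilon<1$, bounds the second term by $C'_\epsilon|U|^{s_\epsilon}$; Proposition~\ref{mdp} then gives $\da F\geq s_\epsilon$, and letting $\epsilon\to 0$ yields $\da F\geq\theta/(p+\theta)$.

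The main obstacle is the bookkeeping of this $\epsilon$-correction from the slowly varying part of $f$: because the gap estimate only gives $M^{-p-1\pm\epsilon}$ rather than a clean $M^{-p-1}$, the natural choice $s=\theta/(p+\theta)$ fails in the mass-distribution step by an unbounded factor $\delta^{-s\epsilon}\to\infty$, and one is forced to work with the strictly smaller Frostman exponent $s_\epsilon$ and take the double limit in the correct order ($\delta\to 0$ first, for fixed $\epsilon$, with constants $C_\epsilon$ independent of $\delta$, and then $\epsilon\to 0$).
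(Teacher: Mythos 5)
Your overall strategy is the one the paper intends (it only remarks that the result ``may be proved in a similar way to Proposition \ref{conseq} using that $xf'(x)/f(x)$ is close to, rather than equal to, $-p$''), and your execution of the main case $0<p<\infty$ is correct: there the mean value theorem gives $f(n)-f(n+1)=-f'(\xi)=\bigl(-\xi f'(\xi)/f(\xi)\bigr)f(\xi)/\xi$ with the first factor tending to $p>0$, so both of your two-sided Karamata estimates hold, and the bookkeeping with $s_\epsilon=\theta/(p+\theta+\epsilon)$ and the identity $-s_\epsilon(p+\epsilon)=\theta(s_\epsilon-1)$ goes through. The $p=\infty$ case is also fine.

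The genuine gap is the lower bound $f(n)-f(n+1)\geq C_\epsilon^{-1}n^{-p-1-\epsilon}$ in the case $p=0$: when $-xf'(x)/f(x)\to 0$ the displayed factorisation only yields $f(n)-f(n+1)=o(f(n)/n)$, with no lower bound at all, and the claimed estimate is actually false for some admissible $f$. For instance, set $x_{j+1}=e^{x_j}$ and let $-f'$ be a smoothing of the decreasing step function equal to $c_j=1/(j^2x_{j+1})$ on $[x_j,x_{j+1})$; then $f(x_j)\approx\sum_{k\geq j}k^{-2}\approx 1/j$, one checks $f'$ is increasing and $xf'(x)/f(x)\to 0$, yet the gap at $n=x_j$ is about $e^{-n}/j^2$, far below every $n^{-1-\epsilon}$. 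So your count of $\delta$-separated points among $f(1),\dots,f(M)$ collapses at these scales and the deduction $\lbd F=1$ is unjustified. The conclusion for $p=0$ is still true, but it needs a different count: given small $\delta$, choose $x$ with $-f'(x)=\delta$ (possible since $-f'$ is continuous and decreases to $0$); all consecutive gaps beyond $x$ are then at most $\delta$, so the tail $\{f(m):m\geq x\}\cup\{0\}$ meets every interval of length $\delta$ in $[0,f(x)]$ and forces $N_\delta(F)\gtrsim f(x)/\delta\geq\delta^{-1+\epsilon'}$, using $x\leq 1/\delta$ and $f(x)\geq x^{-\epsilon'}$. This gives $\lbd F=1=\dim_\textup{A}F$, after which Proposition \ref{assouad} finishes the case as you intended (note also that this proposition needs $\lbd F=\dim_\textup{A}F$, not $\ubd F=\dim_\textup{A}F$, to control the \emph{lower} intermediate dimension).
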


This may be proved in a similar way to Proposition \ref{conseq} using that $xf'(x)/f(x)$ is close to, rather than equal to, $-p$ when $x$ is large.

For example, taking $f(x) = 1/\log(x+1)$, the sequence
\be\label{logs}
F_{\log} = \Big\{ 0, \frac1{\log 2}, \frac1{\log 3}, \frac1{\log 4}, \ldots \Big\}
\ee
has $\dth F_{\log} =  1$ if $\theta \in (0,1]$ and ${\mbox{\rm dim}}_{\,0}F_{\log} =0$, so there is a discontinuity at 0. On the other hand,      with $f(x) = e^{-x}$,
$$F_{\exp} = \big\{ 0, e^{-1}, e^{-2}, e^{-3}, \ldots\big\}$$
has $\dth F_{\exp} =  0$ for all $\theta \in [0,1]$.

\subsection{Simple examples illustrating different behaviours} \label{examplessimple}

Using the examples above together with tools from Section \ref{tools} we can build up simple examples of sets exhibiting various behaviours as $\theta$ ranges over $[0,1]$, shown in Figure 1.
\medskip

\begin{figure}[t]
\centering
\includegraphics[width=0.95\textwidth]{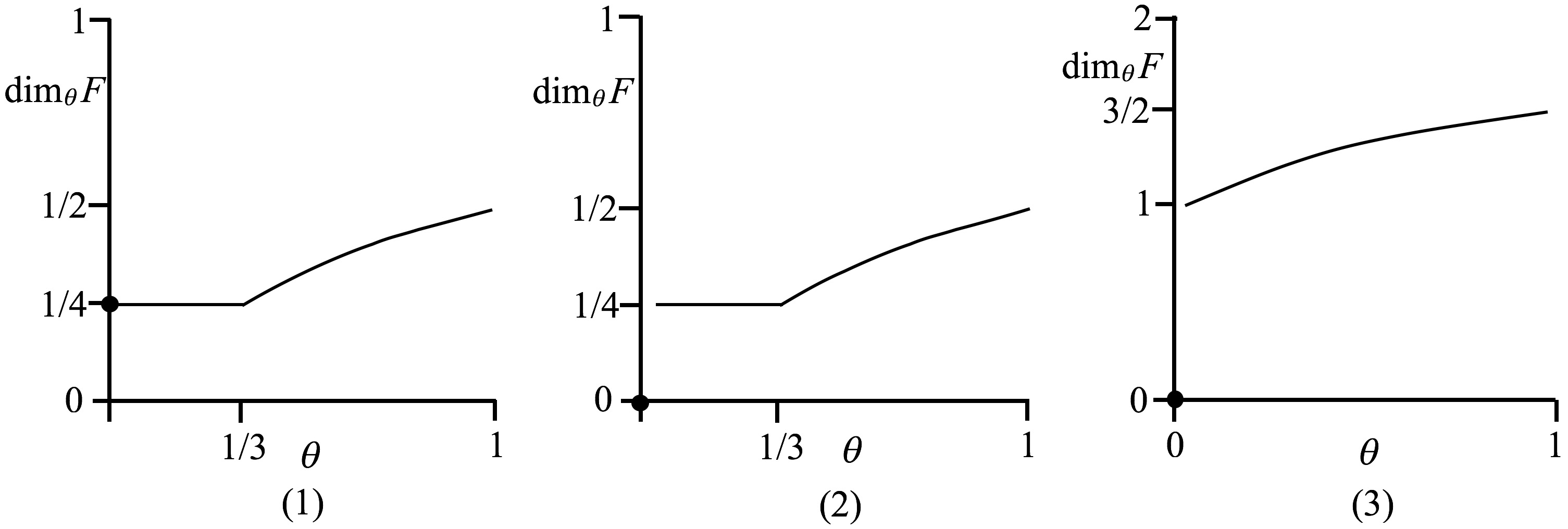}
\caption{Graphs of  $\da F$ for the three examples in Section \ref{examplessimple}}
\end{figure}

\noindent\emph{Example 1: Continuous at $0$, part constant, then strictly increasing. }  Let $F =F_{1} \cup E$ where $F_{1}$ is as in \eqref{fp} and let $E \subset \mathbb{R}$ be any compact set with $\hdd E = \ubd E = 1/4$ (for example a suitable self-similar set).  Then  
\[
\dth F =  \max\Big\{ \frac{ \theta}{1+ \theta},  \, 1/4\Big\}\qquad (\theta \in [0,1]).
\]
This follows using \eqref{pex} and the finite stability of upper intermediate dimensions. 
\medskip

\noindent \emph{Example 2:  Discontinuous at $0$, part constant, then strictly increasing.}  Let $F = F_{1}\cup E$ where this time $E\subset \mathbb{R}$ is any closed countable set with $\lbd E = \dim_\textup{A} E = 1/4$.  Using Proposition \ref{assouad} and finite stability of upper intermediate dimensions, 
\medskip

\[
\dth F =  \max\Big\{ \frac{ \theta}{1+ \theta},  \, 1/4\Big\} \qquad (\theta \in (0,1].
\]
Note that the intermediate dimensions are exactly as in Example 1 except when $\theta=0$ and a discontinuity occurs.
\medskip

\noindent \emph{Example 3: Discontinuous at $0$, smooth and strictly increasing.} Consider the countable set
\[
F = F_{1} \times F_{\log}\subset \mathbb{R}^2.
\]
Then  ${\mbox{\rm dim}}_{0}F= \hdd F=0$ and
\[
\dth F =  \frac{ \theta}{1 + \theta}+ 1 \qquad (\theta \in (0,1]),
\]
noting that  $\dth F_{\log}  = \bdd F_{\log}  = \dim_\textup{A} F_{\log}= 1$ for $\theta \in (0,1]$
using \eqref{logs} and Propositions  \ref{assouad} and \ref{products}.

\subsection{Circles, spheres and spirals} \label{spirals}

Infinite sequences of concentric circles and spheres with radii tending to 0 might be thought of as higher dimensional analogues of the sets $F_p$ defined in \eqref{fp}. A countable union of concentric circles will have Hausdorff dimension 1, but the box and intermediate dimensions may be greater as a result of the accumulation of circles at the centre. For $p>0$ define the family of circles 
$$C_p = \big\{x \in \mathbb{R}^2: |x| \in F_p\big\}.$$
Tan \cite{Tan} showed, using the mass distribution principle and the Frostman lemma, Proposition \ref{frostman}, that
$$\da C_p = \overline{\dim}_\theta C_p= 
\left\{
\begin{array}{ll} 
 \frac{2p+2\theta(1-p)}{2p+\theta(1-p)}  & \mbox{\rm if } 0<p\leq 1     \\
  1 &  \mbox{\rm if } 1\leq p     \\   
\end{array}
\right.
$$
with analogous formulae for concentric spheres in $\mathbb{R}^n$ and also for families of circles or spheres with radii given by other monotonic sequences converging to 0. He also considers  families of points evenly distributed across such sequences of circles or spheres for which the intermediate dimension may be discontinuous at 0.

Closely related to circles are spirals. For $0< p\leq q$ define  
$$S_{p,q} = \bigg\{\bigg(\frac{1}{t^p} \sin \pi t, \frac{1}{t^q} \cos \pi t \bigg):t\geq 1\bigg\} \subset\mathbb{R}^2.$$
Then $S_{p,q}$ is a spiral winding into the origin, if $p=q$ it is a circular polynomial spiral, otherwise it is an elliptical polynomial spiral.  Burrell, Falconer and Fraser \cite{BFF2} calculated that  
$$\da S_{p,q} = \overline{\dim}_\theta S_{p,q}= 
\left\{
\begin{array}{ll}
 \frac{p+q+2\theta(1-p)}{p+q+\theta(1-p)}  & \mbox{\rm if } 0<p\leq 1\\
  1 &  \mbox{\rm if } 1\leq p     \\   
\end{array}
\right. .
$$
Not unexpectedly, when $p=q$ these circular polynomial spirals have the same intermediate dimensions as the concentric circles $C_p$.

Another variant is the `topologist's sine curve' given, for $p>0$ by 
$$T_p = \bigg\{\bigg(\frac{1}{t^p}, \sin \pi t \bigg):t\geq 1\bigg\} \subset \mathbb{R}^2,$$
that is the graph of the function $f:(0,1] \to \mathbb{R}$ given by $f(x) = \sin(\pi x^{-1/p})$.
Tan \cite{Tan} used related methods show that 
$$\da T_p = \overline{\dim}_\theta T_p = \frac{p+2\theta}{p+\theta},$$
as well as finding the intermediate dimensions of various generalisations of this curve.

\section{Bedford-McMullen carpets}\label{bmc}

Self affine carpets are a well-studied class of fractals where the Hausdorff and box-counting dimensions generally differ; this is a consequence of the alignment of the component rectangles in the iterated construction. The dimensions of planar self-affine carpets were first investigated by Bedford \cite{bedford} and McMullen \cite{mcmullen} independently, see also \cite{peres}, and these carpets have been widely studied and generalised, see \cite{Fa1,FraSur} and references therein.  Finding the intermediate dimensions of these carpets gives information about the range of scales of covering sets needed to realise their Hausdorff and box-counting dimensions. Deriving exact formulae seems a major challenge, but some lower and upper bounds have been obtained, in particular enough to demonstrate continuity of the intermediate dimensions at $\theta=0$ and that they attain a strict  minimum when $\theta=0$. 

\begin{figure}[t]
\centering
\includegraphics[width=0.8\textwidth]{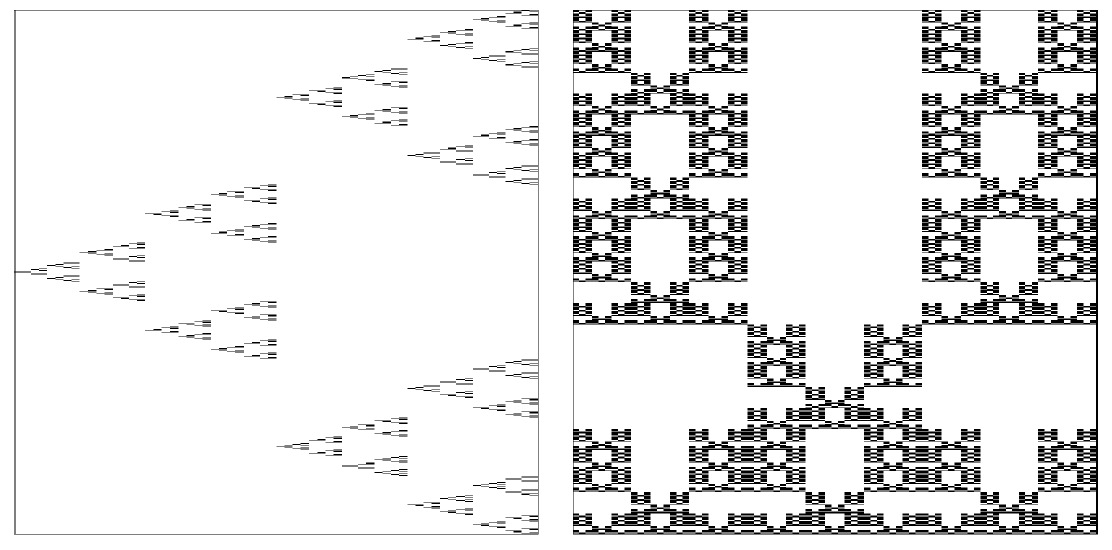}
\caption{A $2\times3$ and a $3\times5$ Bedford-McMullen carpet}
\end{figure}

Bedford-McMullen carpets are attractors of iterated function systems of a set of affine contractions, all translates of each other which preserve horizontal and vertical directions. More precisely, for integers $n > m \geq 2$, an $m\times n$-carpet is defined in the following way.
Let ${I}=\left\{0,\ldots, m-1 \right\}$ and ${J}=\left\{0,\ldots, n-1 \right\}$ and let $D \subset I \times J$ be a {\it digit set} with at least two elements.  For each  $\left( p,q \right)\in D$ we define the affine contraction $S_{\left(p,q \right)}\colon [0,1]^2 \rightarrow [0,1]^2$ by
\[
S_{ \left( p,q \right)}\left(x,y\right) = \left( \frac{x+p}{m},\frac{y+q}{n} \right) .
\]
Then $ \left\{ S_{ \left(p,q \right)} \right\}_{ \left( p,q \right)\in D}$ is an iterated function system so there exists a unique non-empty compact set $F \subset [0,1]^2$ satisfying 
\[
F=\bigcup_{(p,q) \in D}S_{ \left( p,q \right)}(F)
\]
called a {\em Bedford-McMullen self-affine carpet}, see Figure 2 for examples.  The carpet can also be thought of as the set constructed using a `template' consisting of the selected rectangles $ \left\{ S_{ \left(p,q \right)}([0,1]^2) \right\}_{ \left( p,q \right)\in D}$ by repeatedly substituting affine copies of the template in each of the selected rectangles. 

Bedford \cite{bedford} and McMullen \cite{mcmullen} showed that the box-counting dimension of $F$ exists with
\be\label{dimb}
\bdd F = \frac{\log M}{\log m} + \frac{\log N - \log M}{\log n} 
\ee
where $N$ is the total number of selected rectangles and $M$ is the number of $p$ such that there is a $q$ with $(p,q) \in D$, that is the number of columns of the template containing at least one rectangle.
They also showed that 
\be\label{dimh}
\hdd F = \frac{\log \big(\sum_{p=1}^m N_p^{\log_n m} \big)}{\log m},
\ee
where $N_p \, (1\leq p\leq m)$ is the number of $q$ such that $(p,q)\in D$, that is the number of  rectangles  in the $p$th column of the template. The Hausdorff and box-counting dimensions of $F$ are equal if and only if the number of selected rectangles in every non-empty column is constant. 

Virtually all work on these carpets depends on dividing the iterated rectangles into `approximate squares'. The box-counting dimension result \eqref{dimb} is then a straightforward counting argument. The Hausdorff dimension \eqref{dimb} argument is more involved; McMullen's approach defined a Bernoulli-type measure $\mu$ on $F$ via the iterated rectangles and obtained an upper bound for the local upper density of $\mu$ that is valid everywhere and a lower bound valid $\mu$-almost everywhere. These ideas have been adapted and extended for estimating intermediate dimensions, but with the considerable complication that one seeks good density estimates that are valid over a restricted range of scales, but even getting close estimates for the intermediate dimensions seems a considerable challenge.

The best upper bounds known at the time of writing are:
\be\label{upbound}
\overline{\dim}_\theta F \ \leq \  \hdd F +  \bigg(\frac{2\log(\log_m n)\log a}{\log n}\bigg) \frac{1}{-\log \theta}
\quad \big(0<\theta <{\textstyle\frac{1}{4}}(\log_n\! m)^2\big),
 \ee
proved in \cite{FFK}. The $-1/\log\theta$ term makes this  a very poor upper bound as $\theta$ increases away from 0, but at least  it implies that $ \underline{\dim}_\theta F$ and $ \overline{\dim}_\theta F$ are continuous at $\theta = 0$ and so are continuous on $[0,1]$. 
 An upper bound for $\theta$ that is better except close to 0  was given in \cite{Kol}:
\be\label{upbound2}
 \overline{\dim}_\theta F \ \leq \  \bdd F   - \frac{ \Delta_0(\theta)}{\log n} (1-\theta) <\bdd F  \quad (\log_n m\leq\theta <1),
 \ee
where $\Delta_0(\theta)$ is the solution an equation involving a large deviation rate term which can be found numerically in particular cases. This upper bound is strictly increasing near 1 and by monotonicity also gives a constant upper bound if  $0< \theta <\log_n m$.

A reasonable lower bound that is linear in $\theta$ is
\begin{equation}\label{lowerbnd}
\underline{\dim}_{\theta}F\geq \hdd F+\theta\frac{\log |D|-H(\mu)}{\log n} \quad (0\leq \theta \leq 1),
\end{equation}
where $H(\mu)$ is the entropy of McMullen's measure $\mu$; this was essentially proved in \cite{FFK}, but see \cite{Kol} for a note on the constant. In particular this
implies that there is a strict minimum for the intermediate dimensions at $\theta =0$.  An alternative lower bound depending on optimising a certain function was given by \cite{Kol}: 
\begin{equation}\label{lowerbnd2}
\underline{\dim}_{\theta}F\geq \sup_{t>0} \psi(t,\theta) \quad (0\leq \theta \leq 1)
\end{equation}
Here $\psi(t,\theta)$ depends on entropies of linear interpolants of probability measures of the form 
$\theta^t \widetilde{{\bf p}} + (1-\theta^t )\widehat{{\bf p}}$ and $\theta^t \widetilde{{\bf q}} + (1-\theta^t )\widehat{{\bf q}}$ where $\widetilde{{\bf p}}, \widetilde{{\bf q}}$ and $\widehat{{\bf p}},\widehat{{\bf q}}$ are measures that occur naturally in the calculations for, respectively, the box-counting and Hausdorff dimensions of the carpets. Of course, the lower bounds given by Corollary \ref{genlbc} for a general $F$ in terms of box-counting dimensions also apply here. In particular, Banaji's general lower bound \cite[Proposition 3.10]{Ban} in terms of the box and Assouad dimensions of $F$ gives the best-known lower bound for $\theta$ close to 1 for some, though not all, Bedford-McMullen carpets.

Many questions on the intermediate dimensions of these carpets remain, most notably finding the exact forms of 
$\underline{\dim}_{\theta}F$ and $\overline{\dim}_{\theta}F$. Towards that we would at least conjecture that the lower and upper intermediate dimensions are equal and strictly monotonic.

\section{Potential-theoretic formulation}

The potential-theoretic approach for estimating Hausdorff dimensions goes back to Kaufman \cite{Kau}. More recently box-counting dimensions have been defined in terms of energies and potentials with respect to suitable kernels and these have been used to obtain results on the box-counting dimensions of projections of sets in terms of `dimension profiles', see \cite{fal:2020a,fal:2020b}. In particular the box-counting dimension of the projection of a Borel set $F\subset \rn$ onto $m$-dimensional subspaces is constant for almost all subspaces (with respect to the natural invariant measure) generalising the long-standing results of  Marstrand \cite{mar:1954} and Mattila \cite{mat:1975} for Hausdorff dimensions. 

As with Hausdorff and box-counting dimensions, it turns out that $\theta$-intermediate dimensions can be characterised in terms of capacities with respect to certain kernels, and this can be extremely useful as will be seen in Section \ref{secproj}.
Let $\theta \in (0, 1]$ and $0<m \leq n$ ($m$ is often an integer, though it need not be so). For $0 \leq s \leq m$ and $0<r <1$, define the  kernels
\be\label{ker}
{\phi}_{r, \theta}^{s, m}(x) = \begin{cases} 
      1 & 0\leq |x| < r \\
      \big(\frac{r}{|x|}\big)^s & r\leq |x| < r^\theta   \\
      \frac{r^{\theta(m-s) + s}}{|x|^m}\ & r^\theta \leq |x|
   \end{cases} \qquad (x\in \rn).
\ee
If  $s = m$ this reduces to 
\be\label{kerb}
\phi_{r, \theta}^{m, m}(x) = \begin{cases} 
      1 &0\leq |x| < r \\
      \big(\frac{r}{|x|}\big)^m & r \leq |x| 
   \end{cases}\qquad (x\in \rn),
\ee 
which are the kernels $\phi_r^m(x)$ used   in the context of box-counting dimensions \cite{fal:2020a,fal:2020b}.   Note that  ${\phi}_{r, \theta}^{s, m}(x)$ is continuous in $x$ and monotonically decreasing in $|x|$.     Let $\mathcal{M}(F)$ denote the set of Borel probability measures supported on a compact $F\subset \rn$. The \emph{energy} of $\mu \in \mathcal{M}(F)$ with respect to $\phi_{r, \theta}^{s, m}$ is  
\begin{equation}\label{endef}
\int\int \phi_{r, \theta}^{s, m}(x - y) \,d\mu(x)d\mu(y)
\end{equation}
and the {\em potential} of $\mu$ at $x \in \rn$ is
\begin{equation}\label{potdef}
\int \phi_{r, \theta}^{s, m}(x - y)\,d\mu(y).
\end{equation}
The \emph{capacity} $C_{r, \theta}^{s, m}(F)$ of $F$ is the reciprocal of the minimum energy achieved by probability measures on $F$, that is
\begin{equation}\label{capdef}
{C_{r, \theta}^{s, m}(F)} = \left(\inf\limits_{\mu \in \mathcal{M}(E)} \int\int \phi_{r, \theta}^{s, m}(x - y) \,d\mu(x)d\mu(y)\right)^{-1}.
\end{equation}
Since $\phi_{r, \theta}^{s, m}(x)$ is continuous in $x$ and strictly positive and $F$ is compact, $C_{r,\theta}^{s, m}(F)$ is positive and finite. For general bounded sets we take the capacity  of a set to be that of its closure.\\

The existence of energy minimising measures and the relationship between the minimal energy and the corresponding potentials is standard in classical potential theory, see \cite[Lemma 2.1]{fal:2020a} and \cite{BFF} in this setting.   
In particular, there exists an {\em equilibrium measure} $\mu \in \mathcal{M}(E)$ for which the energy \eqref{endef}
attains a minimum value, say $\gamma$. Moreover, the potential \eqref{potdef} of this equilibrium measure is at least $ \gamma$
for all $x \in F$ (otherwise perturbing $\mu$ by a point mass where the potential is less than $\gamma$ reduces the energy) with equality for $\mu$-almost all $x \in F$. These properties turn out to be key in expressing these dimensions in  terms of capacities.

Let $F \subset \rn$ be compact,  $m\in (0,n]$, $\theta \in (0, 1]$ and  $r\in (0,1)$. It may be shown that 
\be\label{decrease}
\frac{\log C_{r, \theta}^{s, m}(F)}{-\log r}\, -\, s
\ee
is continuous in $s$ and decreases monotonically from positive when $s=0$ to negative or 0 when $s=m$. Thus there is a unique $s$ for which \eqref{decrease} equals 0. Moreover, the rate of decrease of  \eqref{decrease} is bounded away from 0 and from $-\infty$ uniformly for $r\in (0,1)$. 
This means we can pass to the limit as $r\to 0$ and for each  $m\in(0,n]$ define the \emph{lower $\theta$-intermediate dimension profile} of $F \subset \rn$ as
\be\label{lidp}
\lid^m F =  \textnormal{ the unique } s\in [0,m] \textnormal{ such that  } \lowlim\limits_{r \rightarrow 0}\frac{\log C_{r, \theta}^{s, m}(F)}{-\log r} = s
\ee
and the \emph{upper $\theta$-intermediate dimension profile} as
\be\label{ludp}
\uid^m F =  \textnormal{ the unique } s\in [0,m] \textnormal{ such that  } \uplim\limits_{r \rightarrow 0}\frac{\log C_{r, \theta}^{s, m}(F)}{-\log r} = s.
\ee

Since the kernels $ \phi_{r, \theta}^{t, m}(x)$ are decreasing in $m$ the intermediate dimension profiles \eqref{lidp} and \eqref{ludp}  are increasing in $m$.

The reason for introducing \eqref{lidp} and \eqref{ludp} is that they not only permit an equivalent definition of $\theta$-intermediate dimensions but also give the intermediate dimensions of the images of sets under certain mappings, as we will see in Section \ref{secproj}. The following theorem states the equivalence  between intermediate dimensions when defined by sums of powers of diameters as in Definition \ref{adef} and using this
capacity formulation.

\begin{theo}\label{equivdim}
Let $F \subset \rn$ be bounded and $\theta \in (0, 1]$. Then
\begin{equation*}
\lid F = \lid^{n} F
\end{equation*}
and
\begin{equation*}
\uid F = \uid^{n} F.
\end{equation*}
\end{theo}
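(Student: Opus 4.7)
The plan is to prove $\uid F = \uid^n F$ by establishing the two matching inequalities; the companion equality $\lid F = \lid^n F$ follows by the identical argument with $\limsup$ replaced by $\liminf$ and ``for all sufficiently small $r$'' replaced by ``for a sequence $r_k\downarrow 0$'' throughout. The bridge between the two sides is the quantitative passage between probability measures on $F$ and admissible covers of $F$: the three-piece structure of $\phi_{r,\theta}^{s,n}$ in \eqref{ker}---constant on $\{|x|<r\}$, $s$-decaying on the annulus $\{r\le|x|<r^\theta\}$, and $n$-decaying outside---is calibrated precisely so that the energy \eqref{endef} interacts cleanly with covers $\{U_i\}$ satisfying $r\le|U_i|\le r^\theta$.

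For the direction $\uid^n F \le \uid F$, fix $s > \uid F$ and $\epsilon > 0$, and for each small $r$ take a cover $\{U_i\}$ of $F$ with $r\le|U_i|\le r^\theta$ and $T:=\sum_i|U_i|^s\le\epsilon$. Since the kernel is monotonically decreasing in $|x|$ and $|U_i|\le r^\theta$, every $x,y\in U_i$ satisfy $\phi_{r,\theta}^{s,n}(x-y)\ge(r/|U_i|)^s$, hence for any probability measure $\mu$ on $F$,
\[
\iint \phi_{r,\theta}^{s,n}(x-y)\,d\mu(x)\,d\mu(y) \;\ge\; \sum_i(r/|U_i|)^s\,\mu(U_i)^2.
\]
Since $\sum_i\mu(U_i)\ge 1$, the Cauchy--Schwarz inequality with weights $(|U_i|/r)^s$ bounds the right-hand side from below by $r^s/T \ge r^s/\epsilon$, giving $C_{r,\theta}^{s,n}(F)\le \epsilon r^{-s}$. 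Taking $\log/(-\log r)$ and then $\limsup_{r\to 0}$ followed by $\epsilon\downarrow 0$ yields $\uid^n F \le s$.

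For the reverse inequality $\uid F \le \uid^n F$, fix $s > \uid^n F$ and $\eta>0$ such that $C_{r,\theta}^{s,n}(F)\le r^{-s+\eta}$ for all small $r$. Taking $F$ compact (by the closure property in Section~2.1), the equilibrium measure $\mu_r$ has potential $\int\phi_{r,\theta}^{s,n}(x-y)\,d\mu_r(y)\ge \gamma_r$ for every $x\in F$, where $\gamma_r\ge r^{s-\eta}$. The task is to extract from this uniform potential lower bound an admissible cover $\{U_i\}$ of $F$ with $\sum_i|U_i|^s\to 0$ as $r\to 0$. The approach is to split $F=F_1\cup F_2\cup F_3$ according to which of the three kernel regimes contributes at least $\gamma_r/3$ to the potential at a given point, and to cover each $F_j$ at an appropriate scale in $[r,r^\theta]$ by a Vitali-type extraction whose counting estimate is driven by $\gamma_r$ and the bounded-overlap of $r$-separated sets. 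The main obstacle lies in $F_2$: the intermediate regime spans $O(|\log r|)$ dyadic scales, and a crude per-scale extraction incurs multiplicative losses that degrade the bound. A refined argument is needed that assigns each $x\in F_2$ a single scale $t(x)\in[r,r^\theta]$ via pigeonhole on the dyadic decomposition of the integral giving the potential, trading mass against the $s$-cost of cover diameters so that the $s$-decay of the kernel cancels the $|U_i|^s$ contributions exactly. This delicate bookkeeping is carried out in \cite{BFF}.
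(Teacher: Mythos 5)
Your proposal follows essentially the same route as the paper, which itself only describes the strategy and defers the details to \cite[Section 4]{BFF}: one direction converts an admissible cover into a lower bound on the energy of every probability measure (hence an upper bound on the capacity), and the other converts the uniform potential lower bound satisfied by the equilibrium measure into an admissible cover via a decomposition into annuli. Two caveats: the displayed inequality $\iint \phi_{r,\theta}^{s,n}(x-y)\,d\mu(x)\,d\mu(y) \ge \sum_i (r/|U_i|)^s\mu(U_i)^2$ is false for overlapping covers and should be applied to the disjointified sets $U_i\setminus\bigcup_{j<i}U_j$ (whose diameters are still at most $|U_i|$ and whose masses sum to exactly $1$, so Cauchy--Schwarz goes through unchanged); and, like the survey, you leave the pigeonhole/Vitali bookkeeping of the harder direction to \cite{BFF}, so what you have is a correct outline consistent with the cited proof rather than a self-contained argument.
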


The proof of these identities involve relating the potentials to $s$-power sums of diameters of  covering balls of $F$ with diameters in the required range, using a decomposition into annuli to relate this to the kernels, see \cite[Section 4] {BFF}.

We defined the intermediate dimension profiles  $\lid^m F$ and $\uid^m F$  for $F\subset \rn$ but Theorem \ref{equivdim} refers just to the case when $m=n$. The significance of these dimension profiles  when $0<m<n$  will become clear in the next section.

\section{Projections and other images}\label{secproj}

The relationship between the dimensions of a set $F \subset \rn$ and its orthogonal projections $\pi_V(F)$ onto  subspaces $V\in G(n,m)$, where $G(n,m)$ is the Grassmannian of $m$-dimensional subspaces of $\rn$ and $\pi_V: \rn \to V$ denotes orthogonal projection, goes back to the foundational work on Hausdorff dimension  by Marstrand \cite{mar:1954} for $G(2,1)$ and Mattila \cite{mat:1975} for general $G(n,m)$. They showed that for a Borel set $F \subset \rn$ 
\be\label{marmat}
\hdd \pi_V(F)  = \min\{\hdd F, m\}  
\ee
for almost all $m$-dimensional subspaces $V$ with respect to the natural invariant probability measure $\gamma_{n,m}$ on $G(n,m)$, where $\hdd$ denotes Hausdorff dimension. Later Kaufman \cite{Kau} gave a potential-theoretic proof of these results.
See, for example, \cite{FFJ} for a survey of the many generalisations, specialisations and consequences of these projection results. In particular, there are theorems that guarantee that the lower and upper box-counting dimensions and the packing dimensions of the projections $\pi_V(F)$ are constant for almost all $V\in  G(n,m)$, see \cite{fal:2020a,fal:2020b,FH2,How}. This constant value is {\it not} the direct analogue of \eqref{marmat} but rather it is given by a dimension profile of $F$. 

Thus a natural question is whether there is a Marstrand-Mattila-type theorem for intermediate dimensions, and it turns out that this is the case with the $\theta$-intermediate dimension profiles $\lid^m F$ and $\uid^m F$ defined in \eqref{lidp} and \eqref{ludp} providing the almost sure values for orthogonal projections from $\rn$ onto $m$-dimensional subspaces. 
Intuitively, we think of  $\lid^m F$ and $\uid^m F$ as the intermediate dimensions of $F$ when regarded from an $m$-dimensional viewpoint. 

\begin{theo}\label{main}
Let $F \subset \rn$ be bounded. Then, for  all $V \in G(n, m)$
\begin{equation}\label{mains}
\lid \pi_V F \leq \lid^{m} F \quad\mathrm{and}\quad \uid \pi_V F \leq \uid^{m} F
\end{equation}
for all    $\theta\in (0,1]$.  Moreover, for $\gamma_{n, m}$-almost all $V \in G(n, m)$,
\begin{equation}\label{mainas}
\lid \pi_V F = \lid^{m} F \quad\mathrm{and}\quad \uid \pi_V F = \uid^{m} F
\end{equation}
for  all $\theta\in (0,1]$.
\end{theo}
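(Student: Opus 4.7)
The strategy follows the potential-theoretic approach to Marstrand--Mattila type projection theorems, adapted to the kernels $\phi_{r,\theta}^{s,m}$. Applying Theorem \ref{equivdim} with ambient dimension $m$ to $\pi_V F$ regarded as a compact subset of $V\cong \mathbb{R}^m$, we have $\lid \pi_V F = \lid^m \pi_V F$ and $\uid \pi_V F = \uid^m \pi_V F$, so the task reduces throughout to comparing the capacities $C_{r,\theta}^{s,m}(\pi_V F)$ and $C_{r,\theta}^{s,m}(F)$ built from the same kernel.

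For the pointwise inequality \eqref{mains}, fix $V$ and let $\nu \in \mathcal{M}(\pi_V F)$ be an equilibrium measure at parameters $(r,\theta,s,m)$. Pick a Borel lift $\mu \in \mathcal{M}(F)$ with $\pi_V\mu = \nu$ using a Kuratowski--Ryll-Nardzewski selector on the compact set $F$. Since $|\pi_V(x-y)| \leq |x-y|$ and $\phi_{r,\theta}^{s,m}$ is monotone in $|\cdot|$,
\begin{equation*}
\frac{1}{C_{r,\theta}^{s,m}(\pi_V F)} \,=\, \int\!\!\int \phi_{r,\theta}^{s,m}(\pi_V(x-y))\,d\mu(x)d\mu(y) \,\geq\, \int\!\!\int \phi_{r,\theta}^{s,m}(x-y)\,d\mu(x)d\mu(y) \,\geq\, \frac{1}{C_{r,\theta}^{s,m}(F)},
\end{equation*}
so $C_{r,\theta}^{s,m}(\pi_V F) \leq C_{r,\theta}^{s,m}(F)$ for every $r,s$. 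Monotonicity of the expression defining $\lid^m, \uid^m$ in \eqref{lidp}--\eqref{ludp} in $C$ yields \eqref{mains}.

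The heart of the almost-sure equality \eqref{mainas} is an averaged kernel estimate of the form
\begin{equation*}
\int_{G(n,m)} \phi_{r,\theta}^{s,m}(\pi_V z)\,d\gamma_{n,m}(V) \,\leq\, c(n,m,s)\,\phi_{r,\theta}^{s,m}(z) \qquad (z \in \rn \setminus \{0\}).
\end{equation*}
Rotation invariance of $\gamma_{n,m}$ reduces the left side to a one-variable integral against the explicit Beta-type density of $|\pi_V z|/|z|$ on $[0,1]$; one then integrates the three-piece definition \eqref{ker} of the kernel over the three regimes of $|\pi_V z|$ relative to $r$ and $r^\theta$. The exponents appearing in the three pieces of $\phi_{r,\theta}^{s,m}$ are precisely calibrated so that each regime contributes a multiple of $\phi_{r,\theta}^{s,m}(z)$ with a constant independent of $r$ and $\theta$. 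Verifying this matching across all three regimes (and their interaction with the Beta weight, especially when $s<m$) is the main technical obstacle, paralleling and extending the box-counting computation in \cite{fal:2020a,fal:2020b}.

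Granted this estimate, Fubini gives $\int_{G(n,m)} E(\pi_V\mu)\,d\gamma_{n,m}(V) \leq c\,E(\mu)$ for any $\mu \in \mathcal{M}(F)$, where $E$ denotes energy with respect to $\phi_{r,\theta}^{s,m}$. For the upper profile, fix $s < \uid^m F$, select a sequence $r_j \to 0$ realising the $\limsup$ in \eqref{ludp}, and apply the Fubini bound to the equilibrium measures $\mu_{r_j}$ on $F$: a Chebyshev estimate together with Borel--Cantelli yields, for $\gamma_{n,m}$-a.e. $V$, a subsequence along which $C_{r_j,\theta}^{s,m}(\pi_V F) \geq c'\,C_{r_j,\theta}^{s,m}(F)$, and hence $\uid \pi_V F \geq s$; letting $s\nearrow \uid^m F$ through a countable sequence gives $\uid \pi_V F = \uid^m F$ almost surely. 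The lower profile $\lid$ is handled similarly but using a countable dense set of radii $r$, so that the bound holds for all small radii outside a single null set of $V$. Finally, to obtain \eqref{mainas} simultaneously for all $\theta \in (0,1]$ on one full-measure set, run the argument on a countable dense subset of $(0,1]$ and invoke continuity in $\theta$ of both sides (Proposition \ref{ctyest}, applied to $F$ and to $\pi_V F$) to extend to all $\theta$.
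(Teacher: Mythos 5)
Your overall architecture coincides with the paper's (and with the full proof in \cite[Section 5]{BFF}): reduce via Theorem \ref{equivdim}, applied in $V\cong\mathbb{R}^m$, to comparing capacities; prove the pointwise bound \eqref{mains} by lifting the equilibrium measure on $\pi_V F$ and using that $|\pi_V(x-y)|\le |x-y|$ together with monotonicity of the kernel; and obtain the almost sure bound from an averaged kernel estimate followed by Fubini, Chebyshev and Borel--Cantelli over a countable set of radii, with a countable dense set of $\theta$ and continuity in $\theta$ to finish. The pointwise half is correct as you have written it, and the direction you derive, $C_{r,\theta}^{s,m}(\pi_V F)\le C_{r,\theta}^{s,m}(F)$, is the right one for concluding $\uid^m\pi_V F\le \uid^m F$ from \eqref{lidp}--\eqref{ludp}.

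However, your key averaged kernel estimate is false as stated. You claim
$\int_{G(n,m)}\phi_{r,\theta}^{s,m}(\pi_V z)\,d\gamma_{n,m}(V)\le c(n,m,s)\,\phi_{r,\theta}^{s,m}(z)$
with $c$ independent of $r$, asserting that the exponents of the three pieces are ``precisely calibrated'' so that each regime contributes only a constant multiple of $\phi_{r,\theta}^{s,m}(z)$. They are not: the outer piece of the kernel in \eqref{ker} decays like $|x|^{-m}$, which is exactly the critical exponent for projection onto $m$-planes. The distribution of $u=|\pi_V z|/|z|$ under $\gamma_{n,m}$ has density comparable to $u^{m-1}$ near $0$, so the regime $r^\theta\le|\pi_V z|\le|z|$ contributes on the order of
$r^{\theta(m-s)+s}|z|^{-m}\int_{r^\theta/|z|}^{1}u^{-m}\,u^{m-1}\,du=\phi_{r,\theta}^{s,m}(z)\,\log\bigl(|z|/r^\theta\bigr)$,
which is unbounded relative to $\phi_{r,\theta}^{s,m}(z)$ as $r\to0$ (already for $n=2$, $m=1$, $|z|=1$ the integral exceeds a constant times $\phi_{r,\theta}^{s,1}(z)\,\theta\log(1/r)$). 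This is precisely why the estimate displayed in the paper carries an extra logarithmic factor. The gap is repairable in the standard way --- replace $s$ by $t<s$ in the Chebyshev/Borel--Cantelli step and absorb $\log(1/r_k)$ into $r_k^{-(s-t)}$, as is done in \cite{BFF} --- but as written your lemma fails, the deferred ``verification of the matching'' cannot succeed in the form you describe, and the subsequent quantitative steps must be adjusted to carry the logarithm.
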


The upper bounds  in \eqref{mains}  utilise the fact that orthogonal projection does not increase distances, so does not increase the values taken by the kernels, that is 
$$\phi_{r, \theta}^{s, m}(\pi_V x-\pi_V y)\geq \phi_{r, \theta}^{s, m}(x-y)  \qquad (x,y\in  \rn).$$
By comparing the energy of the equilibrium measure on $F$ with its projections onto each $\pi_V F$ it follows that $C_{r, \theta}^{s, m}(\pi_V F)\geq C_{r, \theta}^{s, m}(F) $ and using \eqref{lidp} or \eqref{ludp} gives the $\theta$-intermediate dimensions of $\pi_V F$ as a subset of the $m$-dimensional space $V$.

The almost sure lower bounds in \eqref{mainas} essentially depend on the relationship between the kernels and on $\rn$ and on their averages over  $V \in G(n,m)$. More specifically, 
for  $m \in \{1, \dots, n-1\}$ and $0 \leq s < m$ there is a constant $a> 0$, depending only on $n,m$ and $s$, such that for all $x \in \rn$, $\theta \in (0, 1)$ and $0< r < \frac{1}{2}$,
$$
\int \phi_{r, \theta}^{s, m}(\pi_V x- \pi_V y) d\gamma_{n, m}(V) \leq \ a\,\phi_{r, \theta}^{s, m}(x-y) \log \frac{r}{|x-y|}.
$$
Using this for a sequence $r= 2^{-k}$  with a Borel-Cantelli argument gives  \eqref{mainas}. Full details may be found in \cite[Section 5]{BFF}.
\medskip

Theorem \ref{main} has various consequences, firstly concerning continuity at  $\theta=0$.
\begin{cor} \label{cor1}
Let $F \subset \mathbb{R}^n$ be such that $\lid F$ is continuous at $\theta=0$. Then $\lid \pi_V F$ is continuous at $\theta =0$ for almost all $V$. A similar result holds for the upper intermediate dimensions.
\end{cor}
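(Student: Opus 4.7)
The plan is to transfer continuity from $F$ to $\pi_V F$ via the dimension profile $\lid^m F$, which by Theorem~\ref{main} is the almost-sure value of $\lid \pi_V F$. Continuity at $\theta = 0$ will be obtained by sandwiching $\lid^m F$ between $\min\{\dimh F, m\}$ from below (using the classical Marstrand--Mattila projection theorem together with the basic inequality $\dimh \leq \lid$) and the same quantity from above (using monotonicity of the profiles in $m$ together with the hypothesis on $F$).

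First I would fix a single full-measure subset of $G(n,m)$ on which both the conclusion of Theorem~\ref{main} and the Marstrand--Mattila identity $\dimh \pi_V F = \min\{\dimh F, m\}$ hold. For any such $V$ and every $\theta \in (0,1]$ one has $\lid \pi_V F = \lid^m F$, so continuity of $\lid \pi_V F$ at $\theta = 0$ reduces to establishing
\[
\lim_{\theta \to 0^+} \lid^m F = \min\{\dimh F, m\}.
\]

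For the upper bound, monotonicity of the profiles in $m$ (noted after \eqref{lidp}--\eqref{ludp}) together with Theorem~\ref{equivdim} gives $\lid^m F \leq \lid^n F = \lid F$, which tends to $\dimh F$ as $\theta \to 0^+$ by hypothesis; combined with the trivial $\lid^m F \leq m$, this yields $\limsup_{\theta \to 0^+} \lid^m F \leq \min\{\dimh F, m\}$. For the lower bound, the basic inequality $\dimh \pi_V F \leq \lid \pi_V F$ together with Marstrand--Mattila gives $\min\{\dimh F, m\} \leq \lid^m F$ for every $\theta \in (0,1]$, and hence the matching $\liminf$. The upper-intermediate case is identical, replacing $\lid$ by $\uid$ throughout.

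There is no serious obstacle: Theorem~\ref{main} already supplies a single full-measure set of $V$ on which the profile identity is valid uniformly in $\theta \in (0,1]$, so no countable-intersection or measurability argument is needed. The only point requiring care is monotonicity of $\lid^m F$ in $m$, which rests on the kernels $\phi_{r,\theta}^{s,m}$ being decreasing in $m$; this is recorded in the paper immediately after the definitions of the dimension profiles.
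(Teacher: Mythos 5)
Your argument is correct and is essentially the paper's own proof: both rest on the chain $\hdd\pi_V F\leq\lid\pi_V F\leq\lid^m F\leq\lid F$ (via \eqref{mains}, monotonicity of the profiles in $m$, and Theorem \ref{equivdim}) together with Marstrand--Mattila and the continuity hypothesis to squeeze $\lid\pi_V F$ to $\hdd\pi_V F$ as $\theta\to 0^+$. The only cosmetic difference is that you phrase the limit uniformly as $\min\{\hdd F,m\}$ where the paper splits into the cases $\hdd F\geq m$ and $\hdd F<m$.
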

\begin{proof}
If $\hdd F \geq m$ then for almost all $V$, $\hdd \pi_V(F) = m = \lid \pi_V F$ for all $\theta \in [0,1]$ by \eqref{marmat}. Otherwise, for almost all $V$ and all $\theta \in [0,1]$,
$$\hdd F = \hdd\pi_V F\leq \lid \pi_V F \leq \lid^m F\leq \lid F \to \hdd F$$
as $\theta \to 0$, where we have used \eqref{marmat} and \eqref{mains}.
\end{proof}

For example, taking  $F\subset \mathbb{R}^2$ to be an $m\times n$ Bedford-McMullen carpet (see Section \ref{bmc}), it follows from \eqref{upbound} and Corollary \eqref{cor1} that the intermediate dimensions of projections of $F$ onto almost all lines are continuous at 0.  In fact more is true: if $\log m /\log n \notin \mathbb{Q}$ then $\lid \pi_V F$ and $\uid \pi_V F$ are continuous at 0 for projections onto {\em all} lines $V$, see \cite[Corollaries 6.1 and 6.2]{BFF} for more details.

The following surprising corollary shows that continuity of intermediate dimensions of a set at 0 is enough to imply a relationship between the {\em Hausdorff dimension} of a set and  the {\em box-counting} dimensions of its projections.

\begin{cor}\label{cor4}
Let $F \subset \mathbb{R}^n$ be a bounded set such that $\lid F$ is continuous at $\theta=0$.  Then
\[
\lbd \pi_V F = m
\]
for almost all $V \in G(n, m)$ if and only if
\[
\hdd F \geq m.
\]
A similar result holds on replacing lower by upper dimensions.
\end{cor}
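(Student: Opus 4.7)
The plan is to handle the two directions separately. For the ``if'' direction I would use the Marstrand--Mattila projection theorem \eqref{marmat} combined with the trivial bound $\hdd \leq \lbd$. For the ``only if'' direction the key tool is inequality \eqref{ineqsbcA} from Corollary \ref{genlbc}, but applied inside the projected subspace rather than the ambient $\mathbb{R}^n$, together with the Lipschitz property of $\lid$.

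For the direction $\hdd F \geq m \Rightarrow \lbd \pi_V F = m$ almost surely, Marstrand--Mattila gives $\hdd \pi_V F = m$ for $\gamma_{n,m}$-almost every $V$, so $\lbd \pi_V F \geq \hdd \pi_V F = m$, while the reverse inequality $\lbd \pi_V F \leq m$ is automatic because $\pi_V F \subset V \cong \mathbb{R}^m$.

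For the converse, I would suppose $\lbd \pi_V F = m$ for almost every $V$ and apply Corollary \ref{genlbc} to the bounded set $\pi_V F$ regarded as sitting in the $m$-dimensional space $V$. Inequality \eqref{ineqsbcA}, now with ambient dimension $m$, reads
\[
\lid \pi_V F \ \geq \ m - \frac{m - \lbd \pi_V F}{\theta} \ =\  m
\]
for every $\theta \in (0,1]$. Combined with the trivial bound $\lid \pi_V F \leq m$, this forces $\lid \pi_V F = m$ identically on $(0,1]$ for such $V$. Since orthogonal projection is $1$-Lipschitz, the Lipschitz property of Section~\ref{sec:props} then gives $\lid F \geq \lid \pi_V F = m$ for all $\theta \in (0,1]$. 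Letting $\theta \to 0^+$ and invoking the hypothesis that $\lid F$ is continuous at $0$ yields $\hdd F \geq m$.

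What initially looks like an obstacle is finding a way to propagate a single-$\theta$ fact (namely $\underline{\dim}_1 \pi_V F = m$) back to a statement about $\hdd F$; it is resolved by realising that \eqref{ineqsbcA} must be applied inside $V$, so that the constants $n$ appearing there collapse to $m$ and the inequality becomes a pointwise equality on the whole interval $(0,1]$. From that point the argument is a two-line chain: Lipschitz bound, then continuity at $0$. The upper-dimensions analogue follows by the same argument with $\uid$ and $\ubd$ everywhere in place of $\lid$ and $\lbd$.
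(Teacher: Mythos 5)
Your proof is correct. The ``if'' direction is the same as the paper's. For the converse, both arguments first establish that $\lid \pi_V F = m$ for all $\theta\in(0,1]$ --- the paper via the final statement of Proposition \ref{assouad} (box dimension equal to the ambient dimension of $V$ forces the intermediate dimensions to equal it), you via \eqref{ineqsbcA} applied inside $V$; these are essentially interchangeable, since \eqref{ineqsbcA} with $\lbd \pi_V F=m$ gives the same conclusion without invoking Assouad dimension. Where you genuinely diverge is in how you convert this into a statement about $\hdd F$. The paper transfers the continuity hypothesis from $F$ to $\pi_V F$ using Corollary \ref{cor1} (which rests on the projection theorem for intermediate dimension profiles, Theorem \ref{main}), deduces $\hdd \pi_V F = m$, and then recovers $\hdd F \geq m$ from Marstrand--Mattila \eqref{marmat}. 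You instead pull the equality $\lid \pi_V F = m$ back up to $F$ via the elementary $1$-Lipschitz bound $\lid \pi_V F \leq \lid F$ of \eqref{Holder}, getting $\lid F \geq m$ on $(0,1]$, and then apply the continuity hypothesis directly to $F$. Your route is more elementary and self-contained: it needs neither Theorem \ref{main} nor Corollary \ref{cor1}, and for this direction it does not even use Marstrand--Mattila --- the ``almost all $V$'' hypothesis is only used to produce a single $V$ with $\lbd \pi_V F = m$, so your argument in fact shows the slightly stronger statement that one such direction suffices. What the paper's approach buys in exchange is the extra information that $\lid \pi_V F$ is itself continuous at $0$ for almost all $V$, which your argument does not produce.
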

\begin{proof}
The `if' direction is clear even without the continuity assumption, since if $\hdd F \geq m$, then 
\[
m \geq \lbd \pi_V F  \geq \hdd \pi_V F  \geq m
\]
for all $V$ using \eqref{marmat}.  

On the other hand, suppose  that $\lbd \pi_V F = m$ for almost all $V$.  The final statement of Proposition \ref{assouad} gives that $\lid  \pi_V F = m$ for all $\theta \in (0,1]$ for almost all $V$.  As $\lid F$  is assumed continuous at $\theta=0$, Corollary \ref{cor1} implies that $\lid  \pi_V F$ is continuous at  $0$ for almost all $V$ and so 
$\hdd F =\hdd \pi_V F = \underline{\mbox{\rm dim}}_{0} \pi_V F=m$ for almost all $V$, using \eqref{marmat}.
\end{proof}

An striking  example of this is given by products of the sequence sets $F_p$ of \eqref{fp} for $p >0$.  By Proposition \ref{conseq} $\bdd F_p = \theta/(\theta+p)$  so by Proposition \ref{products}
\[
\mbox{\rm dim}_{\theta} (F_p \times F_p) = \frac{2\theta}{\theta+p} \quad (\theta \in [0,1]),
\]
which is continuous at $\theta =0$. Since $\hdd (F_p \times F_p) =0$,  Corollary \ref{cor4} implies that 
\[
\ubd \pi_V (F_p \times F_p) < 1
\]
for  almost all $V$. This is particularly striking when $p$ is close to 0, since $\bdd (F_p \times F_p) =2/(1+p)$ is close to 2 but still the box-counting dimensions of its projections never reach 1. In fact, a calculation not unlike that in Proposition \ref{conseq} shows that 
 for all projections onto lines $V$, apart from the horizontal and vertical projections,
\[
\ubd \pi_V( F_p \times F_p)  =  1- \left(\frac{p}{p+1}\right)^2.
\]

Analogous ideas using dimension profiles can be be used to find dimensions of images of a given set $F$ under other parameterised families of mappings. These include images under certain stochastic processes (which are parameterised by points in the probability space). For example, let $B_\alpha: \mathbb{R} \to \mathbb{R}^m $ be index-$\alpha$ fractional Brownian motion where $0<\alpha<1$, see for example \cite[Section 16.3]{Fa}.  The following theorem generalises the result of Kahane \cite{Kau} on the Hausdorff dimension of fractional Brownian images and that of Xiao \cite{Xi} for  box-counting and packing dimensions of fractional Brownian images. 

\begin{theo}\label{brown}
Let $F \subset \rn$ be compact. Then, almost surely, for all $0\leq \theta\leq 1$,
\begin{equation}\label{brownid}
\lid B_\alpha(F) = \frac1\alpha \lid^{m\alpha} F \quad\mathrm{and}\quad \lid B_\alpha(F) = \frac1\alpha \lid^{m\alpha} F.
\end{equation}
\end{theo}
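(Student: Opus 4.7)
The plan is to follow the template of Theorem~\ref{main} for projections, with fractional Brownian motion playing the role of a stochastic ``projection'' under which distances are raised in law to the power $\alpha$; the correspondence of kernel parameters $(s,m\alpha,r)\leftrightarrow(s/\alpha,m,r^\alpha)$ is the scaling bookkeeping that identifies the right kernels on $F$ and on $B_\alpha(F)$. The engine of the argument is a key Gaussian comparison: for $0<s<m\alpha$ there exist $c_1,c_2>0$, depending only on $n,m,\alpha,s$, such that for all $x\neq y\in\rn$, $r\in(0,\tfrac12)$ and $\theta\in(0,1]$,
\[
c_1\,\phi_{r,\theta}^{s,m\alpha}(x-y)\;\leq\;\mathbb{E}\bigl[\phi_{r^\alpha,\theta}^{s/\alpha,m}\bigl(B_\alpha(x)-B_\alpha(y)\bigr)\bigr]\;\leq\;c_2\,\phi_{r,\theta}^{s,m\alpha}(x-y).
\]
Since $B_\alpha(x)-B_\alpha(y)$ is equal in law to $|x-y|^\alpha Z$ for a fixed centred Gaussian $Z$ on $\mathbb{R}^m$, the substitution $z=|x-y|^\alpha w$ reduces the middle term to a Gaussian integral of a piecewise power function, and a case analysis matching the three pieces of $\phi_{r^\alpha,\theta}^{s/\alpha,m}$ against the three pieces of $\phi_{r,\theta}^{s,m\alpha}(x-y)$, according to whether $|x-y|$ lies below $r$, between $r$ and $r^\theta$, or above $r^\theta$, produces both constants.

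Given this, the almost sure lower bound $\uid B_\alpha(F)\geq\tfrac1\alpha\uid^{m\alpha}F$ (and its $\lid$ analogue, selecting measures for \emph{all} sufficiently small $r$ rather than along a subsequence) is a pushforward argument. For $s$ just below $\uid^{m\alpha}F$, the profile definition \eqref{ludp} and the existence of equilibrium measures supply near-optimal $\mu_k\in\mathcal{M}(F)$ along a sequence $r_k\to 0$ with $\int\!\!\int \phi_{r_k,\theta}^{s,m\alpha}\,d\mu_k\,d\mu_k\leq c\,r_k^{s}$; the upper half of the key estimate together with Fubini then gives
\[
\mathbb{E}\Bigl[\int\!\!\int \phi_{r_k^\alpha,\theta}^{s/\alpha,m}\,d(B_\alpha\mu_k)\,d(B_\alpha\mu_k)\Bigr]\;\leq\;c_2c\,r_k^{s},
\]
so a Markov inequality with polynomially growing threshold combined with Borel--Cantelli along a geometric sequence $r_k=\lambda^{-k}$ yields a single almost sure event on which $B_\alpha\mu_k$ certifies $\log C_{r_k^\alpha,\theta}^{s/\alpha,m}(B_\alpha F)/(-\log r_k^\alpha)\geq s/\alpha-o(1)$ infinitely often, hence $\uid B_\alpha(F)\geq s/\alpha$. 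Running $s$ through a countable dense subset of $[0,\uid^{m\alpha}F)$ and $\theta$ through $(0,1]\cap\mathbb{Q}$, and using monotonicity of both sides in $\theta$, transfers the bound to all $\theta$ on a single almost sure event.

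The matching upper bound $\uid B_\alpha(F)\leq\tfrac1\alpha\uid^{m\alpha}F$ is the stochastic counterpart of the deterministic pointwise inequality in Theorem~\ref{main}, and is where the main obstacle lies. Where the projection version exploited $|\pi_V(x-y)|\leq|x-y|$ pointwise, here the analogous kernel monotonicity is present only in expectation, via the \emph{lower} half of the key estimate, so one must argue that \emph{every} $\nu\in\mathcal{M}(B_\alpha F)$ has sufficiently large energy almost surely. I would disintegrate an arbitrary $\nu=B_\alpha\tilde\mu$ with $\tilde\mu\in\mathcal{M}(F)$, use Paley--Zygmund or a second-moment concentration bound on the deterministic family $\{I_{\rho_k,\theta}^{t,m}(B_\alpha\tilde\mu_j)\}$ for a countable weak-$*$ dense $\{\tilde\mu_j\}\subset\mathcal{M}(F)$ to obtain simultaneous almost sure lower bounds, and then upgrade to all $\nu$ via continuity of the energy functional in $\tilde\mu$ together with uniform continuity of $B_\alpha$ on the compact $F$. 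Converting an expectation-only kernel inequality into a uniform lower bound over \emph{every} measure on the random compact set $B_\alpha(F)$, and maintaining this control simultaneously across the countable net $\{\tilde\mu_j\}$, the dyadic sequence of scales, and the continuum of parameters $\theta\in(0,1]$ inside a single almost sure event, is the principal technical burden; the remaining ingredients are routine adaptations of the potential-theoretic machinery in \cite[Section~5]{BFF}.
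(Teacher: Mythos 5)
Your lower bound (the almost sure inequality $\uid B_\alpha(F)\geq \frac1\alpha\uid^{m\alpha}F$, and its $\lid$ analogue for all small $r$) follows the paper's route: the upper half of your Gaussian comparison is precisely the estimate $\mathbb{E}\bigl(\phi_{r,\theta}^{s,m}(B_\alpha(x)-B_\alpha(y))\bigr)\leq c\,\phi_{r,\theta}^{s,m}(x-y)$ quoted in the text, and Fubini plus Markov plus Borel--Cantelli along a geometric sequence of scales is how \cite{Bur} proceeds; your parameter bookkeeping $(s,m\alpha,r)\leftrightarrow(s/\alpha,m,r^\alpha)$ is also the right one.

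The gap is in your upper bound. You attempt to show that \emph{every} $\nu\in\mathcal{M}(B_\alpha(F))$ has large energy by combining the \emph{lower} half of the expectation estimate with Paley--Zygmund over a weak-$*$ dense countable family of measures. This does not close: Paley--Zygmund converts a first-moment lower bound into a \emph{positive-probability} lower bound on the energy, not an almost sure one, and upgrading it would require genuine concentration, i.e.\ control of second moments of the energy functional, which involves fourth-order joint statistics of the kernel under the correlated increments of $B_\alpha$ --- nothing you have set up --- before one even faces the uniformity over the dense net, all scales and all $\theta$ that you yourself flag as unresolved. The paper avoids this entirely: the upper bound is \emph{deterministic} conditional on the almost sure event that $B_\alpha$ is $(\alpha-\epsilon)$-H\"older on $F$, i.e.\ $|B_\alpha(x)-B_\alpha(y)|\leq M|x-y|^{\alpha-\epsilon}$ with $M$ a random constant. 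Since the kernels are decreasing in $|x|$, this gives a \emph{pointwise} inequality between $\phi^{t,m}_{\rho,\theta}$ evaluated at pairs of image points and a kernel of the form $\phi^{s,m(\alpha-\epsilon)}_{r,\theta}$ evaluated at the original points, valid for every $(x,y)$ simultaneously and hence for every pushforward measure at once --- exactly the role played by $|\pi_Vx-\pi_Vy|\leq|x-y|$ in Theorem \ref{main}. This yields $\uid B_\alpha(F)\leq\frac1{\alpha-\epsilon}\uid^{m(\alpha-\epsilon)}F$ on a single almost sure event, and letting $\epsilon\to0$ finishes. You should replace the Paley--Zygmund scheme by this H\"older argument.
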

 The proof of this is along the same lines as for projections, see \cite{Bur} for details. The upper bound  uses that for all $\epsilon >0$ fractional Brownian motion satisfies an almost sure H\"{o}lder condition $|B_\alpha(x) - B_\alpha(y)|\leq M|x-y|^{1/2 -\epsilon} $ for $x,y\in F$, where $M$ is a random constant. The almost sure lower bound uses that
$$\mathbb{E} \big(\phi_{r, \theta}^{s m} (B_\alpha(x) - B_\alpha(y))\big)\ \leq\ c\, \phi_{r, \theta}^{s m}(x-y)$$
where $c$ depends only on $m$ and $s$.

We can get an explicit form of the intermediate dimensions of these Brownian images taking $F=F_p$ of \eqref{fp}.

\begin{prop}\label{brownfp}
For index-$\alpha$ Brownian motion $B_\alpha: \mathbb{R} \to \mathbb{R}$, almost surely, for all $0\leq \theta\leq 1$ and $p>0$,
\begin{equation}\label{brownidfp}
\lid B_\alpha(F_p)\ =\ \uid B_\alpha(F_p)\ =\ \frac{\theta}{p\alpha+ \theta}.
\end{equation}
\end{prop}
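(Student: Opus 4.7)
The plan is first to invoke Theorem \ref{brown} with $n=m=1$, which reduces matters to showing that
\[
\lid^{\alpha} F_p \;=\; \uid^{\alpha} F_p \;=\; \frac{\alpha\theta}{p\alpha + \theta};
\]
dividing by $\alpha$ then yields the claimed formula $\theta/(p\alpha+\theta)$. As a sanity check, at $\alpha = 1$ this matches $\dth F_p = \theta/(p+\theta)$ from Proposition \ref{conseq} via Theorem \ref{equivdim}, and at $\theta = 0$ it recovers $\hdd F_p = 0$.

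Setting $s = \alpha\theta/(p\alpha+\theta)$ and $M = \lceil r^{-s}\rceil$, I would work with the uniform probability measure $\mu_r = M^{-1}\sum_{k=1}^M \delta_{1/k^p}$ and estimate its energy with respect to the kernel $\phi_{r,\theta}^{s,\alpha}$. Two elementary inequalities are $s \leq 1/(p+1)$ and $s \leq \theta/p$ (both reducing to $\alpha \leq 1$); they guarantee that the atoms of $\mu_r$ are pairwise separated by at least $r$ and that the whole support lies outside $B(0, r^\theta)$. Consequently no pair falls into the innermost piece of the kernel. Splitting the remaining pairs according to whether $|1/j^p - 1/k^p|$ lies in $[r,r^\theta)$ (Region~B) or $[r^\theta, 1]$ (Region~C), and using the two-regime estimate $|1/j^p - 1/k^p| \asymp |k-j|/\max(j,k)^{p+1}$ for $|k-j| \leq \min(j,k)/2$ and $\asymp 1/\min(j,k)^p$ otherwise, a straightforward calculation over the two critical index ranges $1\leq j\leq r^{-\theta/(p+1)}$ and $r^{-\theta/(p+1)}\leq j\leq r^{-\theta/p}$ shows that each of Region~B and Region~C contributes $O(M)$ to the un-normalised double sum, matching the diagonal. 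Hence $E(\mu_r) = O(r^s)$, so $C_{r,\theta}^{s,\alpha}(F_p) \gtrsim r^{-s}$ and $\uid^{\alpha} F_p \leq s$ via \eqref{ludp}.

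For the matching lower bound $\lid^{\alpha} F_p \geq s$, I would show in addition that the potential $\Phi_{\mu_r}(x) = \int \phi_{r,\theta}^{s,\alpha}(x-y)\,d\mu_r(y) \gtrsim r^s$ for every $x \in F_p$; the worst case occurs at the atoms near the accumulation point $0$, and the same regime-splitting as in the upper bound provides the required lower estimate. A standard Cauchy--Schwarz duality argument in the potential-theoretic framework of \cite{BFF} then gives $E(\nu) \gtrsim r^s$ for every probability measure $\nu$ on $F_p$, so that $C_{r,\theta}^{s,\alpha}(F_p) \lesssim r^{-s}$ and $\lid^{\alpha} F_p \geq s$.

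The main obstacle is the multi-scale bookkeeping in the energy estimate. Everything hinges on the algebraic identity $s(p\alpha+\theta) = \alpha\theta$, which is exactly the definition of $s$ and which is precisely what forces Region~B and Region~C to balance against the diagonal. Tracking the transition of $|1/j^p - 1/k^p|$ between its two asymptotic regimes at the two critical indices $j \sim r^{-\theta/(p+1)}$ (spacing equals $r$) and $j \sim r^{-\theta/p}$ (position equals $r^\theta$), and verifying that each of the resulting pieces collapses to the same order, is the technical heart of the proof; it closely parallels the mass-distribution calculation of Proposition \ref{conseq} but now adapted to the three-piece kernel defining the capacity.
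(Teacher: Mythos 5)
Your overall strategy is the right one and is the route the paper intends (the survey states Proposition \ref{brownfp} without proof, deferring to \cite{Bur}; the argument there is precisely to apply Theorem \ref{brown} with $m=1$ and then compute $\lid^{\alpha}F_p=\uid^{\alpha}F_p=\alpha\theta/(p\alpha+\theta)$ by a capacity estimate using the uniform measure on the first $M\approx r^{-s}$ atoms). Your choice of $M$, the checks that the atoms are $r$-separated and lie outside $B(0,r^{\theta})$ (via $s\leq 1/(p+1)$ and $s\leq\theta/p$), and the observation that the identity $s(p\alpha+\theta)=\alpha\theta$ is exactly what makes the off-diagonal kernel sums balance the diagonal, are all correct; Region C balances exactly at the critical $s$ and Region B is subcritical when $\alpha<1$. (Minor slip: the transition index $j\sim r^{-\theta/(p+1)}$ is where consecutive gaps equal $r^{\theta}$, not $r$.)

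There are, however, two genuine problems. First, you have inverted the logic connecting capacity estimates to the dimension profiles in both halves. Exhibiting a measure $\mu_r$ with $E(\mu_r)=O(r^s)$ makes the capacity \emph{large}, $C_{r,\theta}^{s,\alpha}(F_p)\gtrsim r^{-s}$, and since $s\mapsto\frac{\log C_{r,\theta}^{s,\alpha}(F_p)}{-\log r}-s$ is decreasing with rate bounded away from $0$, this forces the root in \eqref{lidp}--\eqref{ludp} to be at least $s$: it is the capacity analogue of Proposition \ref{mdp} and yields the \emph{lower} bound $\lid^{\alpha}F_p\geq s$, not the upper bound you claim; symmetrically, $E(\nu)\gtrsim r^s$ for all $\nu$ gives $C\lesssim r^{-s}$ and hence the \emph{upper} bound $\uid^{\alpha}F_p\leq s$. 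Since you establish both estimates the final equality survives, but each deduction as written is backwards. Second, and more substantively, your path to the harder half (every $\nu\in\mathcal{M}(F_p)$ has $E(\nu)\gtrsim r^s$) via ``Cauchy--Schwarz duality'' from the potential bound $\Phi_{\mu_r}\gtrsim r^s$ requires the kernel $\phi_{r,\theta}^{s,\alpha}$ to be positive semi-definite on $\mathbb{R}$. This is neither asserted in \cite{BFF} nor evident: the kernel is constant on $[0,r)$, hence not convex on $[0,\infty)$, so P\'olya's criterion does not apply, and positive definiteness is not known for these kernels. You should either verify it or bound $E(\nu)$ from below directly for arbitrary $\nu$ (for instance by a pigeonhole over the atoms $1/k^p$ using the three regimes of the kernel), which is how such computations are carried out in \cite{BFF,Bur}. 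Finally, the case $\theta=0$ is not covered by the capacity argument and should be noted separately: $F_p$ is countable, so $\hdd B_\alpha(F_p)=0$, matching the formula.
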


In particular \eqref{brownidfp} is less than the upper bound $\theta /\alpha(p+\theta)$ that comes from directly applying the almost sure  H\"{o}lder condition \eqref{Holder} for $B_\alpha$ to the intermediate dimensions of $F_p$.  

\section{Open problems}

Finally here are a few open questions relating to intermediate dimensions. A general problem is to find the possible forms of intermediate dimension functions. At the very least they are constrained by the inequalities of Proposition \ref{ctyest}.

\begin{question}  Characterise the possible functions $\theta\mapsto \da F$ and $\theta\mapsto \uda F$ that may be realised by some set $F \subset \mathbb{R}$ or $F \subset \mathbb{R}^n$.
\end{question} 

It may be easier to answer more specific questions about the form of the dimension functions. I am not aware of any counter-example to the following suggestion. 

\begin{question}  Is it true that if $\uda F$, respectively $\da F$, is constant for  $\theta \in [a,b] $ where $ 0<a<b\leq 1$ then it must be constant for $\theta \in (0,b] $?
\end{question} 

Similarly, the following question suggested by Banaji seems open.
\begin{question}  Can $\uda F$ or $\da F$ be convex functions of $\theta$, or even (non-constant) linear functions? 
\end{question} 

As far as I know, in all cases where explicit values have been found, the intermediate dimensions equal upper bounds obtained using coverings by sets of just the two diameters $\delta^{1/\theta}$ and $\delta$ (or constant multiples thereof). It seems unlikely that this is enough for every set, indeed Kolossv\'{a}ry \cite[Section 5]{Kol} suggests that three or more diameters of covering sets may be needed to get close upper bounds for the intermediate dimensions of Bedford-McMullen carpets.

\begin{question}  Are there (preferably fairly simple) examples of sets $F$ for which the intermediate dimensions $\uda F$or $\da F$ cannot be approximated from above using coverings by sets just of two diameters? Are there even sets where the number of different scales of covering sets needed to get arbitrary close approximations to the intermediate dimensions is unbounded?
\end{question}

Coming to more particular examples, the Bedford-McMullen carpets are a class of sets where current knowledge of the intermediate dimensions is limited.

\begin{question}  Find the exact form of the intermediate dimensions $\da F$ and $\uda F$ for the Bedford McMullen carpets $F$ discussed in Section 5, or at least improve the existing bounds.
\end{question} 

Getting exact formulae for these dimensions is likely to be challenging, but better bounds, in particular the asymptotic form near $\theta =0$ and $\theta =1$, would be of interest. It would also be useful to know more about the behaviour of the intermediate dimensions of these carpets as functions of $\theta$.

\begin{question}  Are the intermediate dimensions $\da F$ and $\uda F$ of Bedford McMullen carpets $F$ equal? Are they strictly increasing in $\theta$? Are they differentiable, or even analytic, as functions of $\theta$ or can they exhibit phase transitions?
\end{question}



\section*{Acknowledgements}
The author thanks Amlan Banaji, Stuart Burrell, Jonathan Fraser, Tom Kempton and Istv\'{a}n Kolossv\'{a}ry  for many discussions around this topic. The work was supported in part by an EPSRC Standard Grant EP/R015104/1.
\\

\bibliographystyle{plain}

\end{document}